\newtheorem{theorem}{Theorem}[section]
\newtheorem{definition}{Definition}[section]
\newtheorem{notation}{Notation}[section]
\numberwithin{equation}{section}
\begin{document}

	\begin{flushleft}
		{\scriptsize }
	\end{flushleft}
	\setcounter{page}{1}
	\vspace{2cm}
	\author[\hspace{0.7cm}\centerline{
	}]{ K. Christy Rani$^{1}$, I. Dhivviyanandam$^2$ }
	\title[\centerline{  \hspace{0.5cm}}]{Detour Monophonic Vertex Cover Pebbling Number $(DMVCPN)$ of Some Standard Graphs}

	\thanks{\noindent $^1$ Department of Mathematics, Bon Secours College for Women (Autonomous), \newline Thanjavur-613006,Tamil Nadu, India.\\
		\indent \,\,\, e-mail: christy.agnes@gmail.com; https://orcid.org/0000-0002-2837-9388.\\
		\indent $^2$  Department of Mathematics,  North Bengal St. Xavier's College, Rajganj-735134,  West Bengal, India.\\
		\indent \,\,\, e-mail:  divyanasj@gmail.com; https://orcid.org/0000-0002-3805-6638.\\
	 }
	
	%
	\begin{abstract}

Let $G$ be a connected graph with vertex set $V(G)$ and edge set $E(G)$. Pebbling shift is a deletion of two pebbles from a vertex and a placement of one pebble to a neighbouring vertex. The vertex cover set, $D_{vc}$ for graph $G$ is the subset  of $V(G)$ such that every edge in $G$ has minimum one end in  $D_{vc}$. A detour monophonic path is considered to be a longest chordless path between two non adjacent vertices $x$ and $y$.  A detour monophonic vertex cover pebbling number, ${\mu}_{vc}(G),$ is a minimum number of pebbles require to cover all the vertices of the vertex cover set of $G $ with at least one pebble each on them after the transformation of pebbles by using detour monophonic paths. We determine the detour monophonic vertex cover pebbling number (DMVCPN) of Cycle, Path,  Fan and Wheel graphs.   \\

		\bigskip \noindent {\bf Keywords:}  detour monophonic pebbling, vertex cover set,  detour monophonic vertex cover pebbling.\\
		
		\bigskip \noindent {\bf AMS Subject Classification:} 05C12,   05C25, 05C38, 05C76, 05C99

	\end{abstract}
	\maketitle 
	\bigskip
	\bigskip
	%
	\section{\bf Introduction}
Graph pebbling is an interesting area emerged a few decades ago and enhanced Graph theory. Graph pebbling has produced results in  Combinatorial Group Theory and  Combinatorial Number Theory \cite{3}, \cite{4}.
	It is widely applied in network optimization model for transportation and for transmitting information through its medium to military troops, etc. 
		A pebbling move is an  extraction of  two pebbles from one vertex and placing one pebble on the adjacent vertex and eliminating the other pebble.  A vertex cover of a graph $G$ is studied in \cite{7},  which defines the vertex cover as a  set $D_{vc} \subset V(G)$ such that for  each edge $uv \in E(G)$, at least one of $ u$ or $v$ is in $D_{vc}$. 	A detour monophonic path from $x$ to $y$ is the  longest path with no chords studied in   \cite{8}.In this article we define the detour monophonic vertex cover pebbling number, ${\mu}_{vc}(G)$, which is a minimum number of pebbles require to cover all the vertices of the vertex cover set $D_{vc}$ of $G$ and the corresponding edges  of $G$ with at least one pebble each on them after the transformation of pebbles by using detour monophonic paths. We also investigate the detour monophonic vertex cover pebbling number of standard graphs. To prove the worst condition, we use the stacking theorem. It states that when the initial configuration $D$ is placed on a single vertex $v$ such that the distance of $v = {\dot{s}}$ is  maximum and hence ${\dot{s}} =  \sum_{u \in V{G}}{2^{dist(u,v)}}$. Applying this for every vertex $v\in V(G)$, we determine $\mu_{vc}(G)$. We refer  to \cite{1} and \cite{2} for basic terminologies of graph theory. Throughout this paper, we refer  $G$  to be a simple connected graph. \\

   	\begin{definition} \cite{8} 
 		In a graph $G$, the smallest number $\mu (G, x)$ for $x \in G$  such that minimum single  pebble can  be shifted to $x$ using a monophonic path by the successive pebbling shifts to place $\mu (G, x)$ pebbles  on the vertices of $G$. The maximum $\mu (G, x)$ over  $v \in V(G)$ is the monophonic pebbling number denoted by $\mu (G)$.
 	  \end{definition}
 	 	    
	\begin{definition}  \cite{6} 
	Any vertex $v \in V(G)$ is called a source vertex when $D(u, v)$ is maximum.
  \end{definition}

	
	\begin{notation} 
		In this article we use 
		\item a. $D_{vc}$ for the vertex cover set, 
		\item b. ${\dot{s}}$ to denote the source vertex,
		\item c. $D_m$ for the detour monophonic distance and
		\item d. $\mu_{vc} (G) $ to denote the detour monophonic vertex cover pebbling number - $DMVCPN$.
		\end{notation}

\section{\bf Motivation}

  Motivated by the concepts of  the cover pebbling number \cite{3a} and detour monophonic number \cite{8} and monophonic pebbling number \cite{5} with vast applicatoins, we have discussed the detour monophonic vertex cover pebbling number for some standard graphs such as  $C_n (n \geq 3), P_n, F_n  $ and  $W_n$.  Detour monophonic vertex cover pebbling is applied in  deciding the equal distribution of goods on every customer by using the detour monophonic path. This is also applied in the network transmission of the information from one node to the other.\\
  

  \begin{theorem} \label{thm1}
 	Let $P_n$ be a path of order $n \ (n \ is \ odd  \ and \ n \geq 3)$. \\ Then 	$ \mu_{vc}(P_n) =
 	\sum_{i = 0}^{ \lfloor {\frac{n}{2}} \rfloor -1} 2^{2(i)+1}$. 
 		
	\end{theorem}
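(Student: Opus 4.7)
The plan is to treat $P_n$ as a linear object on which detour monophonic paths coincide with ordinary geodesics, and then reduce the statement to a direct application of the stacking-theorem heuristic outlined in the introduction.

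First, I would label $V(P_n) = \{v_1, v_2, \ldots, v_n\}$ consecutively and collect two preliminary facts. Because $n$ is odd, the unique minimum vertex cover of $P_n$ is $D_{vc} = \{v_2, v_4, \ldots, v_{n-1}\}$, which has size $\lfloor n/2 \rfloor = (n-1)/2$. Since $P_n$ is chordless and there is a unique $u$--$v$ walk for every pair of vertices, the detour monophonic distance satisfies $D_m(v_i, v_j) = |i - j|$. I then identify the source. Writing $f(v) := \sum_{u \in D_{vc}} 2^{D_m(u, v)}$ for the stacking-theorem cost, a short pairwise comparison (telescoping $f(v_{j+1}) - f(v_j)$) shows that $f$ is strictly decreasing from $v_1$ toward the middle of the path and symmetrically from $v_n$; hence the source $\dot{s}$ can be taken to be an endpoint, say $v_1$. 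The distances from $v_1$ to $v_2, v_4, \ldots, v_{n-1}$ are the odd integers $1, 3, \ldots, n-2$, so
$$f(v_1) = \sum_{k=1}^{\lfloor n/2 \rfloor} 2^{2k-1} = \sum_{i=0}^{\lfloor n/2 \rfloor - 1} 2^{2i+1},$$
which matches the claimed value.

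For the lower bound, I would place $f(v_1) - 1$ pebbles on $\dot{s} = v_1$ and show that no sequence of detour monophonic pebbling moves can leave at least one pebble on every vertex of $D_{vc}$. Since $v_1$ is an endpoint, every move originating at $v_1$ simply advances pebbles along the unique path toward $v_n$; in particular, delivering one pebble to $v_{2k}$ forces the expenditure of exactly $2^{2k-1}$ pebbles at $v_1$, and a pebble once deposited on $v_{2k}$ cannot be redirected to a different cover vertex without dropping the pebble count on $v_{2k}$ below one. Summing these independent costs over $k = 1, \ldots, \lfloor n/2 \rfloor$ yields precisely $f(v_1)$, so one pebble short is genuinely too few.

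For the upper bound, the stacking theorem cited in the introduction tells us that an arbitrary configuration of $f(v_1)$ pebbles is at least as easy to resolve as the concentrated configuration of $f(v_1)$ pebbles on the source, so it suffices to exhibit a pebbling strategy from $v_1$. A greedy left-to-right scheme works: partition the $f(v_1)$ pebbles on $v_1$ into piles of sizes $2^{2k-1}$ and push the $k$-th pile rightward by successive halving, landing one pebble on each $v_{2k}$. The main obstacle I anticipate is not in any single step but in justifying two points for the detour monophonic setting: (a) that the endpoint truly maximises $f$ against every interior competitor, including cover vertices (where $f$ picks up the extra $2^0$ term), and (b) that the stacking-theorem reduction is valid, i.e.\ any pebbling sequence from an arbitrary starting configuration can be reorganised along detour monophonic paths without exceeding the concentrated cost.
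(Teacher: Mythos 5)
Your proposal is correct and follows essentially the same route as the paper: the lower bound comes from the concentrated configuration of $f(v_1)-1$ pebbles at an endpoint, and the upper bound from the stacking reduction together with a check that no other source vertex costs more. The only cosmetic difference is that you establish the endpoint as the worst source by a monotonicity argument on $f$, whereas the paper tabulates the cost case by case (end, middle, and internal sources with parity subcases); both arguments rest on the same unproved assumption, which the paper also takes for granted, that the stacking reduction remains valid in the detour monophonic setting.
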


   \begin{proof}

Let $P_n$ be a path of order $n$, where $n$ is odd. Let
 $D_{vc}$ be the  vertex cover set. Then, $D_{vc}= \{v_2, v_4, v_6, \cdots , v_{n-1}  \}. $
  Let the initial configuration be	$ \sum_{i = 0}^{ \lfloor {\frac{n}{2}} \rfloor -1} 2^{2(i)+1}-1$ pebbles. Without loss of generality let ${\dot{s}}=v_1$. The detour monophonic distance from $v_1$ to any other vertex  is at most $n-2$. In order to cover the farthest vertex $v_{n-1}$ using the detour monophonic path we  require $2^{n-2}$ pebbles. Similarly, to cover the remaining vertices of  $D_{vc}$,  
 we require the following pebble distributions.  \\
 $2^1+ 2^{3}+ 2^{5}+ 2^{7}+\cdots+ 2^{n-2}$. That  sums upto $ \sum_{i = 0}^{ \lfloor {\frac{n}{2}} \rfloor -1} 2^{2(i)+1}$. But our initial configuration of pebbles is $ \sum_{i = 0}^{ \lfloor {\frac{n}{2}} \rfloor -1} 2^{2(i)+1}-1$ which is a contradiction.
  Therefore,\\
 
 $\mu_{vc}(P_n) \geq  \sum_{i = 0}^{ \lfloor {\frac{n}{2}} \rfloor -1} 2^{2(i)+1}$.\\
  
Now we prove the sufficient condition by using $\sum_{i = 0}^{ \lfloor {\frac{n}{2}} \rfloor -1} 2^{2(i)+1}$ pebbles on the vertices of $P_n $.\\

\noindent{\bf Case 1:} 
Let  ${\dot{s}}= v_1 \ \text{or} \  v_n $. \\
 Without loss of generality, let $v_n$ be the source vertex. Then $D*_{vc}=\{ v_{n-1}, v_{n-3}, v_{n-5},\cdots, v_2  \}$. 
 
\begin{table}[h] 
	\centering 
	\begin{tabular}{|c|c|c|c|c|c|c|c|} \hline
		&$v_2$	&$v_{4}$	&$v_{6}$	&$\cdots$ &$v_{n-5}$	&$v_{n-3}$ &$v_{n-1}$ 
		\\ \hline
		$v_1$ &$1$ &$3$ & $5$ & $\cdots$ &$n-6$ &$n-4$ &$n-2$   \\ \hline
		
		$v_n$ &$n-2$ &$n-4$ &$n-6$  & $\cdots$ & $5$ &$3$ &$1$ \\ \hline
	\end{tabular}
	\vspace{0.2cm}
\caption{Detour monophonic distance from end vertices to  $V(D*_{vc})$.}
\label{tab1}
\end{table} 
	In order to cover the vertices of $P_n$, it suffices to cover the vertices of   vertex cover set $V(D*_{vc})$. 
 From Table \ref{tab1}, we require $2^{n-2}$, $2^{n-4}$ and $2^{n-6}$ pebbles from $v_n$ to cover $v_2$, $v_{4}$ and $v_{6}$ respectively. Proceeding in the same way we use $2^5, 2^3$ and $2$ pebbles to cover the vertices $v_{n-5}$, $v_{n-3}$ and $v_{n-1}$. Thus the following series of pebbles guarantees covering all the vertices of $D*_{vc}$.\\ 
 $\mu_{vc} (G, v_2) + \mu_{vc} (G, v_{4}) + \mu_{vc} (G, v_6) + \cdots + \mu_{vc} (G,v_{ n-5}) + \mu_{vc} (G,v_{ n-3}) + \mu_{vc} (G,v_{ n-1}) $\\
 $=2^{n-2}+2^{n-4}+2^{n-6}+ \cdots +  2^{5} + 2^{3} + 2^{1}$. Thus the number of pebbles used to cover $V(D*_{vc})$ is,\\
$ \sum_{i = 0}^{ \lfloor {\frac{n}{2}} \rfloor -1} 2^{2(i)+1}$. \\

       \noindent{\bf Case 2:} 
 When the middle vertex is the source vertex.   \\
Let  ${\dot{s}= v_{i},  i={\lfloor \frac{n}{2} \rfloor} +1} $.\\
\noindent{\bf Subcase 2.1:} 
If
  $n \equiv 1 (mod ~4)$ and then
 $D^1_{vc}= \{v_2, v_4, v_6, \cdots $,
 $v_{\lfloor{\frac{n}{2}\rfloor}-2}$	, $v_{\lfloor{\frac{n}{2}\rfloor}}$ ,  $v_{\lfloor{\frac{n}{2}\rfloor}+2}$ , $v_{\lfloor{\frac{n}{2}\rfloor}+4}, \cdots , v_{n-1}  \}.$ 
\begin{table}[h]
	\centering 
		\small
	\begin{tabular}{|c|c|c|c|c|c|c|c|c|c|c|c|c|} \hline
		&$v_2$	&$v_{4}$	&$v_{6}$ 	&$\cdots$ &$v_{\lfloor{\frac{n}{2}\rfloor}-2}$	&$v_{\lfloor{\frac{n}{2}\rfloor}}$ &$v_{\lfloor{\frac{n}{2}\rfloor}+2}$ &$v_{\lfloor{\frac{n}{2}\rfloor}+4}$ & $\cdots$ &$v_{n-3}$ & $v_{n-1}$ 
		\\ \hline
		$v_{ \lfloor {\frac{n}{2}} \rfloor +1}$ & ${\lfloor{\frac{n}{2}\rfloor}-1}$ & ${\lfloor{\frac{n}{2}\rfloor}-3}$ & ${\lfloor{\frac{n}{2}\rfloor}-5}$ 	&$\cdots$ & $3$ & $1$ & $1$ & $3$	&$\cdots$  & ${\lfloor{\frac{n}{2}\rfloor}-3}$& ${\lfloor{\frac{n}{2}\rfloor}-1}$  	\\ \hline
			\end{tabular}
	\vspace{0.2cm}
	\caption{Detour monophonic distance from the middle vertex of $ P_{n}$ \  to \ $V(D^1_{vc})$, where $ n \equiv 1 (mod ~4)$ .}
	\label{tab2}
	\end{table} 

 From Table
\ref{tab2}, we use $2^{\lfloor{\frac{n}{2}\rfloor}-1} $, $2^{\lfloor{\frac{n}{2}\rfloor}-3} $  and $2^{\lfloor{\frac{n}{2}\rfloor}-5} $ pebbles from $v_{\lfloor \frac{n}{2} \rfloor} +1$ to cover  $v_2$, $v_4$ and $v_6$. We continue in this manner and cover the vertices of $D^1_{vc}$ with the following pebble distribution. \\
$ \mu_{vc} (G, v_2) + \mu_{vc} (G, v_{4}) + \mu_{vc} (G, v_6) + \cdots+ \mu_{vc} (G, v_{\lfloor{\frac{n}{2}\rfloor}-2})+\mu_{vc} (G, v_{\lfloor{\frac{n}{2}\rfloor}})+\mu_{vc} (G, v_{\lfloor{\frac{n}{2}\rfloor}+2})+\mu_{vc} (G,v_{\lfloor{\frac{n}{2}\rfloor}+4})+ \cdots  + \mu_{vc} (G,v_{ n-3}) + \mu_{vc} (G,v_{ n-1}) $.\\
$=2^{{\lfloor{\frac{n}{2}\rfloor}-1}}+2^{\lfloor{\frac{n}{2}\rfloor}-3}+2^{\lfloor{\frac{n}{2}\rfloor}-5}   + \cdots +  2^{3} + 2^{1} + 2^{1}+ 2^{3}+ \cdots +  2^{\lfloor{\frac{n}{2}\rfloor}-3}+  2^{\lfloor{\frac{n}{2}\rfloor}-1 }$ \\
$= 2(2^{1} + 2^{3} + 2^{5}+ 2^{7}+ \cdots + 2^{\lfloor {\frac{n}{2}} \rfloor -1}$). \\
 Thus in order to cover the vertices of $D^1_{vc}$ we used \\
$2\big(\sum_{j = 1} ^{\lfloor {\frac{n}{4}} \rfloor } 2^{2(j)-1} \big)$ pebbles 
 which is less than the total number of pebbles available. \\

\noindent{\bf Subcase 2.2:} 
Let $ n \equiv 3 (mod ~4)$. Without loss of generality $D^2_{vc}= \{v_2, v_4, v_6, \cdots$ , $v_{\lfloor{\frac{n}{2}\rfloor}-1}$	, $v_{\lfloor{\frac{n}{2}\rfloor}+1}$ , $v_{\lfloor{\frac{n}{2}\rfloor}+3} , \cdots , v_{n-1}  \} $.
\begin{table}[h]
	\centering 

	\begin{tabular}{|c|c|c|c|c|c|c|c|c|c|c|c|} \hline
		&$v_2$	&$v_{4}$	&$v_{6}$ 	&$\cdots$ &$v_{\lfloor{\frac{n}{2}\rfloor}-1}$	&$v_{\lfloor{\frac{n}{2}\rfloor}+1}$ &$v_{\lfloor{\frac{n}{2}\rfloor}+3}$ & $\cdots$  &$v_{n-3}$ & $v_{n-1}$ 
		\\ \hline 
		$v_{ \lfloor {\frac{n}{2}} \rfloor +1}$ & ${\lfloor{\frac{n}{2}\rfloor}-1}$ & ${\lfloor{\frac{n}{2}\rfloor}-3}$ & ${\lfloor{\frac{n}{2}\rfloor}-5}$ 	&$\cdots$ & $2$ & $0$ & $2$	&$\cdots$  &  ${\lfloor{\frac{n}{2}\rfloor}-3}$& ${\lfloor{\frac{n}{2}\rfloor}-1}$  	\\ \hline
		\end{tabular}
	\vspace{0.2cm}
	\caption{Detour monophonic distance from the middle vertex of $ P_{n}$ \  to \ $V(D^2_{vc})$, where $ n \equiv 3 (mod ~4)$ .}
	\label{tab3}
\end{table} 

From Table 
\ref{tab3}, we use  $2^{\lfloor{\frac{n}{2}\rfloor}-1} $, $2^{\lfloor{\frac{n}{2}\rfloor}-3} $  and $2^{\lfloor{\frac{n}{2}\rfloor}-5} $ pebbles to cover $v_2$, $v_4$ and $v_6$. Similarly we cover the vertices of $D^2_{vc}$ with the following pebble distributions. \\
$ \mu_{vc} (G, v_2) + \mu_{vc} (G, v_{4}) + \mu_{vc} (G, v_6) + \cdots+\mu_{vc} (G, v_{\lfloor{\frac{n}{2}\rfloor}-1})+\mu_{vc} (G, v_{\lfloor{\frac{n}{2}\rfloor}+1})+\mu_{vc} (G, v_{\lfloor{\frac{n}{2}\rfloor}+3})+ \cdots  + \mu_{vc} (G,v_{ n-3}) + \mu_{vc} (G,v_{ n-1}) $.\\
$=2^{{\lfloor{\frac{n}{2}\rfloor}-1}}+2^{\lfloor{\frac{n}{2}\rfloor}-3}+2^{\lfloor{\frac{n}{2}\rfloor}-5}  + \cdots +  2^{2} + 2^{0} + 2^{2}+  \cdots +  2^{\lfloor{\frac{n}{2}\rfloor}-3}+  2^{\lfloor{\frac{n}{2}\rfloor}-1 }$ \\
$=2^{0} + 2( 2^{2} + 2^{4}+ 2^{6}+ \cdots + 2^{\lfloor {\frac{n}{2}} \rfloor -1})$. \\

Thus,  we used $  
 2\big(\sum_{k = 1 } ^{\lfloor {\frac{n}{4}} \rfloor } 2^{k} \big)+1$ pebbles, which is less than $\sum_{i = 0}^{ \lfloor {\frac{n}{2}} \rfloor -1} 2^{2(i)+1}$.  \\

\noindent{\bf Case 3:} 
Consider ${\dot{s}}$ is an internal vertex.
Let  ${\dot{s}= v_{i} }$, where $2 \leq i  \leq n-1$,  $i \neq {{\lfloor {\frac{n}{2} } \rfloor}+1} $. \\

\noindent{\bf Subcase 3.1:} 
When $i$   is even.\\
Let $ V(D^3_{vc})= \{v_2$,	$v_{4}$	, $v_{6}$ 	, $\cdots$ , $v_{i-2}$	, $v_{i}$	, $v_{i+2}$ ,  $\cdots$ , $v_{n-3}$ ,  $v_{n-1}\} $.\\
\begin{table}[h]
	\centering 
	\begin{tabular}{|c|c|c|c|c|c|c|c|c|c|c|c|} \hline
		&$v_2$	&$v_{4}$	&$v_{6}$ 	&$\cdots$ 	&$v_{i-2}$	 &$v_{i}$ & $v_{i+2}$  &$\cdots$ &$v_{n-3}$ & $v_{n-1}$ 
		\\ \hline
		$v_i$ &$|i-2|$ &$|i-4|$  &$|i-6|$ 	&$\cdots$ &${2}$ & ${0}$ &${2}$ &$\cdots$ & ${n-(i+3)}$	& ${n-(i+1)}$	\\ \hline 
		\end{tabular}
	\vspace{0.2cm}
	\caption{Detour monophonic distance from the internal vertex of  $P_n$
		to $ V(D^3_{vc})$.}
	\label{tab4}
    \end{table} 

When $i=2$, we use  $2^{0} $, $2^{2} $, $2^{6}, \cdots, 2^{n-5}$ and $2^{n-3}$ pebbles to cover  $v_2$, $v_4$, $v_6, \cdots$, $v_{n-3}$ and $v_{n-5}$ respectively. when $i=4$, we use $2^{2} $, $2^{0} $,    $2^{2}, \cdots, 2^{n-7}$ and $2^{n-5}$ pebbles to cover $v_2$, $v_4$,  $v_6, \cdots$, $v_{n-3}$ and $v_{n-5}$ respectively. Continuing this process, when $i$ is even, we cover the vertices of $D^3_{vc}$   with the following pebble distribution. \\
$ \mu_{vc} (G, v_2) + \mu_{vc} (G, v_{4}) + \mu_{vc} (G, v_6) + \cdots +   \mu_{vc} (G,v_{i-2})  + \mu_{vc} (G,v_{ i}) +  \mu_{vc} (G,v_{ i+2}) + \cdots+  \mu_{vc} (G,v_{ n-3})  + \mu_{vc} (G,v_{ n-1}) $.\\
 From Table \ref{tab4}, we can sum up as follows.\\
$  \sum_{l = 1 } ^{ {\frac{i}{2}}  -1} 2^{2(l)} +  \sum_{m = 0} ^{ {\frac{{n}- (i+1)}{2}} } 2^{2(m)} $.\\

\noindent{\bf Subcase 3.2:} 
When $i$   is odd.\\
Consider $ V(D^4_{vc})= \{v_2$,	$v_{4}$	, $v_{6}$ 	, $\cdots$ , $v_{i-3}$	, $v_{i-1}$	, $v_{i+1}$ ,  $v_{i+3}$  , $\cdots$ , $v_{n-3}$ ,  $v_{n-1}\} $.
\begin{table}[h]
	\centering 
	\begin{tabular}{|c|c|c|c|c|c|c|c|c|c|c|c|c|} \hline
		&$v_2$	&$v_{4}$	&$v_{6}$ 	&$\cdots$ &$v_{i-3}$	&$v_{i-1}$	 &$v_{i+1}$ & $v_{i+3}$  &$\cdots$ &$v_{n-3}$ & $v_{n-1}$ 
		\\ \hline
		$v_i$ &$|i-2|$ &$i-4$  &$i-6$ 	&$\cdots$ &${3}$ &${1}$ & ${1}$ &${3}$ &$\cdots$ & ${n-(i+3)}$	& ${n-(i+1)}$	\\ \hline 
		
	\end{tabular}
	\vspace{0.2cm}
	\caption{Detour monophonic distance from the internal vertices of  $P_n$
		to $ V(D^4_{vc})$.}
	\label{tab5}
\end{table} 

From Table \ref{tab5}, when $i$ is odd,
we cover the vertices of $D^4_{vc}$ with the following pebble distribution. \\
$ \mu_{vc} (G, v_{2}) + \mu_{vc} (G, v_{4}) + \mu_{vc} (G, v_6) + \cdots + \mu_{vc} (G,v_{ n-3}) + \mu_{vc} (G,v_{i-1})  + \mu_{vc} (G,v_{ i+1}) +  \mu_{vc} (G,v_{ i+3}) + \cdots + \mu_{vc} (G,v_{ n-3})  + \mu_{vc} (G,v_{ n-1})$.\\
$= 2^{1}+ 2^{3} + \cdots + 2^{i-4}+ 2^{i-2}+  2^{1} + 2^{3} + \cdots + 2^{n-(i+3)}+ 2^{{n-(i+1)}}$.
Thus, we have utilized \\
$ \sum_{l = 1 } ^{\lfloor {\frac{i}{2}} \rfloor } 2^{2(l)-1} +
\sum_{m = 0 } ^{\lfloor {\frac{{n}- (i+1)}{2}} \rfloor } 2^{2(m)+1} $ pebbles, 
 which is less than the total number of pebbles available.
 Hence,  \\ $\mu_{vc}(P_n) =  \sum_{i = 0}^{ \lfloor {\frac{n}{2}} \rfloor -1} 2^{2(i)+1}$.\\
\end{proof}

  \begin{theorem} \label{thm2}
	Let $P_n$ be a path of order $n (n \ is \ even  \ and \ n \geq 2)$. \\ 
Then 	$ \mu_{vc}(P_n) =\sum_{i = 0}^{ {\frac{n}{2}}  -1} 2^{2(i)}$.
	\end{theorem}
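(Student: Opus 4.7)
The plan is to mirror the structure of the proof of Theorem \ref{thm1}, with one crucial adjustment tailored to the parity of $n$: since $n$ is even, every minimum vertex cover of $P_n$ necessarily contains one of the two endpoints, and whenever I select a specific cover I will arrange for $\dot{s}$ itself to lie in it, so that the single pebble already on $\dot{s}$ suffices for that cover vertex instead of two pebbles being consumed to bring one in. This is exactly why the exponents in the answer are even $(2^0, 2^2, \ldots, 2^{n-2})$ rather than odd.

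For the lower bound I will invoke the stacking theorem and concentrate all pebbles on a source vertex $\dot{s} = v_1$. The unique minimum vertex cover of $P_n$ containing $v_1$ is $\{v_1, v_3, \ldots, v_{n-1}\}$, whose vertices lie at detour monophonic distances $0, 2, 4, \ldots, n-2$ from $v_1$. Placing at least one pebble on each of them simultaneously therefore demands $\sum_{i=0}^{n/2-1} 2^{2i}$ pebbles, so any configuration with one fewer pebble must leave at least one cover vertex uncovered. This yields $\mu_{vc}(P_n) \ge \sum_{i=0}^{n/2-1} 2^{2i}$.

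For the upper bound I will partition the argument by the position of $\dot{s}$ and, in each case, pick a minimum cover containing $\dot{s}$. Case 1 handles $\dot{s} = v_1$ (symmetrically $\dot{s} = v_n$); using the cover $\{v_1, v_3, \ldots, v_{n-1}\}$ (respectively $\{v_2, v_4, \ldots, v_n\}$) the total cost is exactly the formula. Case 2 handles an internal source $\dot{s} = v_i$ with $2 \le i \le n-1$; I will pick the cover whose indices match the parity of $i$, so that $v_i \in D_{vc}$. The detour monophonic distances from $v_i$ to the cover vertices then form the symmetric pattern $i-1, i-3, \ldots, 2, 0, 2, \ldots, n-1-i$, and tabulating the resulting cost in the spirit of Tables \ref{tab4} and \ref{tab5} gives a value at most $\sum_{i=0}^{n/2-1} 2^{2i}$. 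The boundary case $n = 2$ is trivial, as the cover $\{\dot{s}\}$ contains the source and the single available pebble already covers it.

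The step I anticipate as the main obstacle is the bookkeeping in Case 2: verifying in each parity subcase that the two-sided geometric sum of powers of $4$ about $v_i$ is dominated by the one-sided sum appearing on the right-hand side. My intended route is to observe that the endpoint cost has a single dominant term $2^{n-2}$ which already matches or exceeds the contribution from the longer side at any internal $\dot{s}$, while the shorter side contributes only a strictly smaller geometric tail; equality occurs exactly at the near-endpoint internal sources $v_2$ and $v_{n-1}$, where the distance pattern coincides with that from an endpoint. Combining Case 2 with Case 1 and the matching lower bound then delivers $\mu_{vc}(P_n) = \sum_{i=0}^{n/2-1} 2^{2i}$.
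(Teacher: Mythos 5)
Your proposal is correct and follows essentially the same route as the paper: a stacking-type lower bound with all pebbles on an endpoint, followed by a case analysis on the position of the source, choosing a minimum vertex cover containing $\dot{s}$ and bounding the resulting two-sided geometric sum by the one-sided endpoint sum $\sum_{i=0}^{n/2-1}2^{2i}$. One aside is false but harmless: it is not true that every minimum vertex cover of $P_n$ ($n$ even) contains an endpoint (e.g.\ $\{v_2,v_3\}$ in $P_4$); what your argument actually uses, and what does hold, is that some minimum cover contains any prescribed source vertex.
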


    \begin{proof}
	
	Let $P_n$ be a path of order $n$, where $n$ is even. Let 
	$D_{vc}$ be the  vertex cover set. Then, $D_{vc}= \{v_1, v_3, v_5, \cdots , v_{n-1}  \}.$ Let  the initial configuration be  $\sum_{i = 0}^{ {\frac{n}{2}} -1} 2^{2(i)}-1$ and  ${\dot{s}}= v_1$. The detour monophonic distance from $v_1$ to any other vertex of $P_n$ is at most $n-2$. Then using  $2^{n-2}$ pebbles we could cover $v_{n-1}$. Similarly to cover $v_{n-3}$, we require $2^{n-4}$ pebbles. In a similar way  we can  cover the remaining vertices of  $D_{vc}$  
	 using the following pebble distributions.  \\
	$2^{2}+ 2^{4}+ 2^{6}+\cdots+ 2^{n-4}+2^{n-2}$ that sums upto $ \sum_{i = 0}^{ {\frac{n}{2}} -1} 2^{2(i)}-1$. Now, there exists a vertex  $v_1$ uncovered. Thus we fail to cover all the vertices of $D_{vc}$. Hence, \\
			$\mu_{vc}(P_n) \geq  \sum_{i = 0}^{  {\frac{n}{2}}  -1} 2^{2(i)}$.\\

  Now we prove the sufficient condition with $\sum_{i = 0}^{  {\frac{n}{2}}  -1} 2^{2(i)}$ pebbles on the vertices of $P_n$.  \\

   \noindent{\bf Case 1:} 

Assume the end vertex to be the source vertex. \\
Let  ${\dot{s}}= v_i$, where $i \in \{1,n\}$.  \\
Without loss of generality let ${\dot{s}}= v_1$ and $D^1_{vc}=\{ v_1, v_3, v_5,\cdots,  v_{n-3}, v_{n-1} \}$. \\

\begin{table}[h] 
	\centering 
	\begin{tabular}{|c|c|c|c|c|c|c|c|c|c|c|} \hline
		&$v_1$	&$v_{3}$	&$v_{5}$ &$v_{7}$ &$v_{9}$	&$\cdots$ &$v_{n-7}$	 &$v_{n-5}$ &$v_{n-3}$	 &$v_{n-1}$ 
		\\ \hline
		$v_1$ &$0$ &$2$ & $4$  &$6$ &$8$ & $\cdots$  &$n-8$ &$n-6$  &$n-4$ &$n-2$   \\ \hline
		
		\end{tabular}
	\vspace{0.2cm}
	\caption{Detour monophonic distance from end vertex to  $V(D^1_{vc})$.}
	\label{tab6}
\end{table} 

We make use of $2^{0}$, $2^{2}$, $2^{4}$ pebbles to cover the vertices $v_1$, $v_{3}$ and $v_{5}$ respectively. Proceeding in the same way we use $2^{n-4} $ and $2^ {n-2}$ pebbles to cover the vertices  $v_{n-3}$ and $v_{n-1}$. Thus, from Table \ref{tab6} the following series of pebble distribution guarantees covering of all the vertices of $D^1_{vc}$. \\
$ \mu_{vc} (G, v_1) + \mu_{vc} (G, v_{3}) + \mu_{vc} (G, v_5) + \mu_{vc} (G, v_7) + \cdots +\mu_{vc} (G,v_{ n-7}) + \mu_{vc} (G,v_{ n-5}) + \mu_{vc} (G,v_{ n-3}) + \mu_{vc} (G,v_{ n-1}) $\\
$=2^{0} + 2^{2} + 2^{4}+  \cdots + 2^{n-2}$. \\
$=\sum_{i = 0}^{ {\frac{n}{2}}  -1} 2^{2(i)}$.\\
By symmetry, we can cover the vertices of $D^1_{vc}$, when the source vertex is $v_n$. \\

\noindent{\bf Case 2:} 

Let the middle vertex be the source vertex. Then there exist two subcases. \\
\noindent{\bf Subase 2.1:} 

 When $n$ is even and $  n \equiv 0 (mod ~4)$.\\
Assume  ${\dot{s}}= v_{i}$. \\
	If $i={ \frac{n}{2} }$, then $D^2_{vc} = \{v_1, v_3, v_5, \cdots,  v_{ \frac{n}{2}-1 }, v_{ \frac{n}{2}+1 }, \cdots, v_{n-3}, v_{n-1}\} $. \\
If $i={ \frac{n}{2}  +1 } $, then $D^2*_{vc} = \{v_2, v_4, v_6, \cdots,  v_{ \frac{n}{2}-1 }, v_{ \frac{n}{2}+1 }, \cdots,  v_{n}\} $. \\ Without loss of generality, consider the vertex cover set $D^2_{vc}$.

\begin{table}[h] 
	\centering 
	\begin{tabular}{|c|c|c|c|c|c|c|c|c|c|c|c|} \hline
		&$v_1$	&$v_{3}$	&$v_{5}$	&$\cdots$ &$v_{ \frac{n}{2}-3}$	 &$v_{ \frac{n}{2}-1}$ &$v_{ \frac{n}{2}+1}$	 &$v_{ \frac{n}{2}+3}$ &$\cdots$	&$v_{n-3}$	&$ v_{n-1}$ \\ \hline
				$v_i$ &$\frac{n}{2}-1$ &$\frac{n}{2}-3$ & $\frac{n}{2}-5$ & $\cdots$  &$3$ &$1$  &$1$ &$3$ & $\cdots$ &$\frac{n}{2}-3$ &$\frac{n}{2}-1$  \\ \hline
		
	\end{tabular}
	\vspace{0.2cm}
	\caption{Detour monophonic distance from middle vertex to  $V(D^2_{vc})$.}
	\label{tab7}
    \end{table} 

Now, from Table \ref{tab7} the following pebble distribution covers the vertices of  $D^2_{vc}$.

$2^{{{\frac{n}{2}}-1}}+2^{\frac{n}{2}-3}+2^{\frac{n}{2}-5}   + \cdots +  2^{3} + 2^{1} + 2^{1}+ 2^{3}+ \cdots +  2^{\frac{n}{2}-3}+  2^{\frac{n}{2}-1 }$ \\
$= 2(2^{1} + 2^{3} + 2^{5}+ \cdots + 2^{\frac{n}{2} -1})$. \\ 
Similarly, we can cover the vertices of  $D^{2*}_{vc}$.
Thus in this case to cover the vertices of the vertex cover set, we use   \\
$ 2\big(\sum_{j = 0 }^{\frac{n}{4} } 2^{2(j)-1} \big)$ pebbles, where  $n \equiv 0 (mod ~4)$.  \\

\noindent{\bf Subcase 2.2:}
 
When $n $ is even and  $  n \equiv 2 (mod ~4)$.\\ Let  $i={ \frac{n}{2} }$. Then  ${\dot{s}}= v_{ \frac{n}{2} }$.\\
As we consider the  odd middle vertex, the vertex cover set also includes all the odd vertices of $P_n$. Thus, $D^3_{vc}= \{ v_1, v_3, v_5, \cdots, v_{ \frac{n}{2} }, v_{ \frac{n}{2}+2 }, \cdots,  v_{n-1}\}$.
Let  $i={ \frac{n}{2}+1 }$ and  ${\dot{s}}= v_{ \frac{n}{2}+1 }$.  When we consider the  even  middle vertex, the vertex cover set  consists of  all the even vertices of $P_n$. Hence $D^{3*}_{vc} = \{v_2, v_4, v_6, \cdots,  v_{ \frac{n}{2}-1 },  v_{ \frac{n}{2}+1 }, v_{ \frac{n}{2}+3 }, \cdots,  v_{n}\} $. \\Without loss of generality, assume  the vertex cover set $D^{3*}_{vc}$.

\begin{table}[h] 
	\centering 
	\begin{tabular}{|c|c|c|c|c|c|c|c|c|c|c|c|c|} \hline
		&$v_2$	&$v_{4}$	&$v_{6}$	&$\cdots$  &$v_{ \frac{n}{2}-3}$ &$v_{ \frac{n}{2}-1}$	 &$v_{ \frac{n}{2}+1}$ &$v_{ \frac{n}{2}+3}$	 &$v_{ \frac{n}{2}+5}$ &$\cdots$	&$v_{n-2}$	&$ v_{n}$ \\ \hline
		$v_i$ &$\frac{n}{2}-1$ &$\frac{n}{2}-3$ & $\frac{n}{2}-5$ & $\cdots$  &$4$ &$2$  &$0$ &$2$ &$4$ & $\cdots$ &$\frac{n}{2}-3$ &$\frac{n}{2}-1$  \\ \hline
			
	\end{tabular}
\vspace{0.2cm}
\caption{Detour monophonic distance from middle vertex to  $V(D^{3*}_{vc})$.}
\label{tab8}
\end{table} 
Thus, from Table \ref{tab8} we cover the vertices of $D^{3*}_{vc}$ with the following pebble distribution. \\
$ 2^{{\frac{n}{2}}-1}+ 2^{{\frac{n}{2}}-3}+ 2^{{\frac{n}{2}}-5}+ \cdots +  2^{4} + 2^{2} 
+ 2^{0} + 2^{2} + 2^{4}+  \cdots + 2^{{\frac{n}{2}}-3} + 2^{{\frac{n}{2}}-1}$. \\
$ =2^{2{\lfloor{\frac{n}{4}\rfloor}}}+\cdots +  2^{4} + 2^{2} 
+ 2^{0} + 2^{2} + 2^{4}+  \cdots  + 2^{2{\lfloor{\frac{n}{4}\rfloor}}}$. \\
In this manner we can cover the vertices of $D^{3}_{vc}$.
Hence, the pebble distribution is summed up as follows. \\
$1+2\big(\sum_{k = 1} ^{\lfloor{\frac{n}{4}\rfloor}}  2^{2(k)} \big),$ where  $n \equiv 2 (mod ~4) \text{\ and }  {\dot{s}}$ is a middle vertex.  \\

\noindent{\bf Case 3:} 
Consider ${\dot{s}}$ to be an internal vertex.
Let  ${\dot{s}= v_{i} }$, where $2 \leq i  \leq n-1$,  $i \neq  {\frac{n}{2} }, {{\frac{n}{2} } +1} $. \\

\noindent{\bf Subase 3.1:} 

If  $v_{i}$   is even
and $i <{\frac{n}{2}} $. Then $D^4_{vc}  = \{v_1, v_3, v_5, \cdots, 
v_{i-1}, v_{i+1},  \cdots,    v_{n-1}\} $.
\begin{table}[h] 
	\centering 
	\begin{tabular}{|c|c|c|c|c|c|c|c|c|c|c|} \hline
		&$v_1$	&$v_{3}$	&$v_{5}$  &$\cdots$ &$v_{i-3}$	 &$v_{i-1}$	 &$v_{i+1}$	 &$v_{i+3}$	&$\cdots$  	 &$v_{n-1}$ 
		\\ \hline
		$v_i$ &$i-1$ &$i-3$ & $i-5$   & $\cdots$  &$3$ &$1$  &$1$ &$3$ & $\cdots$ & $n-(i+1)$  \\ \hline
		
	\end{tabular}
	\vspace{0.2cm}
	\caption{Detour monophonic distance from 	$v_i$ to  $V(D^4_{vc})$.}
	\label{tab9}
\end{table}  \\ 
From Table \ref{tab9} to cover the vertices of $D^4_{vc}$, we require the following pebble series. The vertices $v_1$ and $v_{n-1}$ are covered with $2^{i-1}$ and $2^{n-(i+1)}$ pebbles. Thus the required pebble distribution results as follows.\\
 $ \mu_{vc} (G, v_1) + \mu_{vc} (G, v_3) +\cdots +  \mu_{vc} (G, v_{i-3}) + \mu_{vc} (G,v_{ i-1}) + \mu_{vc} (G,v_{ i+1})  + \mu_{vc} (G,v_{ i+3})   + \cdots + \mu_{vc} (G,v_{ n-1}) $.\\
 $=  2^{i-1}+ 2^{i-3}+\cdots +2^{3}+2^{1}+ 2^{1}+2^{3}+\cdots+ 2^{n-(i+1)} $\\
$ = \sum_{j = 1 } ^{\lfloor {\frac{i}{2}} \rfloor } 2^{2(j)-1} +  \sum_{k =1}^
		{\lfloor {\frac{n-(i+1)}{2}} \rfloor } 2^{2(k)-1} $.\\
		
When  $i > {\frac{n}{2}+1} $, we obtain the reversal result using the similar pebble distribution. Hence  the number of pebbles employed in the process of covering $D^4_{vc}$ is, \\
 $ \sum_{j =1}^
 {\lfloor {\frac{n-(i+1)}{2}} \rfloor } 2^{2(j)-1} +
 \sum_{k = 1 } ^{\lfloor {\frac{i}{2}} \rfloor } 2^{2(k)-1} 
 $.\\

 \noindent{\bf Subcase 3.2:} 
 
If  $v_{i}$   is odd
and $i < {\frac{n}{2}} $. Then $D^5_{vc}  = \{v_1, v_3, v_5, \cdots, 
v_{i}, v_{i+2},  \cdots,    v_{n-1}\} $. 
\begin{table}[h] 
	\centering 
	\begin{tabular}{|c|c|c|c|c|c|c|c|c|c|c|c|} \hline
		&$v_1$	&$v_{3}$	&$v_{5}$  &$\cdots$ &$v_{i-4}$ &$v_{i-2}$	 &$v_{i}$	 &$v_{i+2}$	 &$v_{i+4}$	&$\cdots$  	 &$v_{n-1}$ 
		\\ \hline
		$v_i$ &$i-1$ &$i-3$ &$i-5$  & $\cdots$ &$4$ &$2$ &$0$  &$2$ &$4$ & $\cdots$ & $n-(i+1)$  \\ \hline
		
	\end{tabular}
	\vspace{0.2cm}
	\caption{Detour monophonic distance from 	$v_i$ to  $V(D^5_{vc})$.}
	\label{tab10}
\end{table}  \\ 
The vertex $v_1$ is covered with $2^{i-1}$ pebbles and $v_{n-1}$ is covered with  $2^{n-(i+1)}$ pebbles. Thus, from Table \ref{tab10} to cover the vertices of $D^5_{vc}$, we require the following pebble distributions.\\  
$\mu_{vc} (G, v_1) + \cdots + \mu_{vc} (G, v_{i-4}) +  \mu_{vc} (G, v_{i-2}) + \mu_{vc} (G,v_{ i}) + \mu_{vc} (G, v_{ i+2})  + \mu_{vc} (G,v_{i+4})  +  \cdots + \mu_{vc} (G,v_{ n-1}) $.\\
$=  2^{i-1}+ \cdots +2^{4}+2^{2}+2^{0}+ 2^{2}+2^{4}+\cdots+ 2^{n-(i+1)} $\\
$ = \sum_{l = 0 } ^{\lfloor {\frac{i}{2}} \rfloor } 2^{2(l)} +  \sum_{m =1}^
{\lfloor {\frac{n-(i+1)}{2}} \rfloor } 2^{2(m)} $.\\

If  $i > {\frac{n}{2}+1} $, we obtain the reversal result using the similar pebble distribution. Hence, \\
 $\sum_{l =1}^
 {\lfloor {\frac{n-(i+1)}{2}} \rfloor } 2^{2(l)} + \sum_{m = 0 } ^{\lfloor {\frac{i}{2}} \rfloor } 2^{2(m)}$.
\\
Thus in each case, we have used the  pebbles which sumps up less than the considered pebbles. Therefore, 	$\mu_{vc}(P_n) = \sum_{i = 0}^{  {\frac{n}{2}}  -1} 2^{2(i)}$.\\
\end{proof} 


   \begin{theorem} \label{thm3}
	For Fan $F_n  (n \ is \ even), \  {\mu_{vc}}(F_n) =
	\sum_{i = 0}^{ {\frac{n}{2}}  -2} 2^{2(i)+1} +2$. 
	\end{theorem}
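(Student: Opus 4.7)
Following the pattern of Theorems~\ref{thm1} and \ref{thm2}, my plan is to identify the minimum vertex cover of $F_n$, describe the detour monophonic distances, establish the lower bound via a stacking configuration at the worst source, and then check each alternative source. Write $F_n = K_1 + P_{n-1}$ with hub $v_0$ joined to the spine $v_1, v_2, \ldots, v_{n-1}$. Since $n-1$ is odd, the unique minimum vertex cover of $P_{n-1}$ is $\{v_2, v_4, \ldots, v_{n-2}\}$, and adjoining $v_0$ handles every hub--spoke, yielding
\[
D_{vc} = \{v_0, v_2, v_4, \ldots, v_{n-2}\}, \qquad |D_{vc}| = n/2.
\]
For the distance structure, observe that $v_0$ is adjacent to every spine vertex, so any chordless path through $v_0$ contains at most one spine vertex on each side of $v_0$ and has length at most $2$; any chordless path avoiding $v_0$ lives inside the tree $P_{n-1}$ and is its unique spine segment. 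Hence $D_m(v_0, v_i) = 1$ for every $i$, and for spine vertices with $|i - j| \geq 2$ we have $D_m(v_i, v_j) = |i - j|$.

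For the lower bound I would place $\sum_{i=0}^{n/2 - 2} 2^{2i+1} + 1$ pebbles on $\dot{s} = v_1$ and tabulate the covering costs: $2$ for $v_0$, $2$ for $v_2$, and $2^{2k-1}$ for each $v_{2k}$ with $2 \leq k \leq n/2 - 1$. These add up to $\sum_{i=0}^{n/2-2} 2^{2i+1} + 2$, one more than the available supply, forcing the inequality $\mu_{vc}(F_n) \geq \sum_{i=0}^{n/2-2} 2^{2i+1} + 2$.

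For the sufficient direction I would place $\sum_{i=0}^{n/2-2} 2^{2i+1} + 2$ pebbles and split into three cases. Case~1 is $\dot{s} \in \{v_1, v_{n-1}\}$, where symmetry reduces to the configuration above and the total equals the claimed bound. Case~2 is $\dot{s} = v_0$: every target sits at detour distance $1$ except $v_0$ itself, giving cost $1 + 2(n/2 - 1) = n - 1$, comfortably below the bound. Case~3 is $\dot{s} = v_i$ interior to the spine with $2 \leq i \leq n-2$, split by parity of $i$. When $i$ is even the source lies in $D_{vc}$ and contributes $2^0$, while the other targets lie at distances $1$ (to $v_0$) and $|2k - i|$ (to $v_{2k}$); when $i$ is odd the source is outside $D_{vc}$, but its two even spine neighbours contribute $2$ each. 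In each subcase the cost decomposes into two geometric tails around $\dot{s}$ whose largest term is at most $2^{n-4}$, strictly less than the dominant term $2^{n-3}$ from Case~1, so the total remains strictly below $\sum_{i=0}^{n/2-2} 2^{2i+1} + 2$.

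The main obstacle lies in Case~3: one must confirm rigorously that sliding $\dot{s}$ one spine step away from $v_1$ kills the dominant term $2^{n-3}$ and replaces it only by smaller powers, and that the short distances newly appearing on the opposite side never compensate the deficit. Cases~1 and~2 are immediate from the distance data, so the parity-split arithmetic in Case~3, with its careful comparison of the two tails on either side of $\dot{s}$, is where the argument earns its keep.
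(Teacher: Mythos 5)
Your proposal follows essentially the same route as the paper: the same vertex cover $\{v_0,v_2,\dots,v_{n-2}\}$, the same lower bound by stacking $\sum_{i=0}^{n/2-2}2^{2i+1}+1$ pebbles on $v_1$ and summing $2^{D_m(\dot{s},u)}$ over the cover, and the same case split on the source (spine endpoints, hub, internal spine vertices by parity) for sufficiency. Your explicit justification that any chordless path through $v_0$ has length at most $2$, so that $D_m(v_i,v_j)=|i-j|$ on the spine, is a welcome tightening of a point the paper only tabulates, but the argument is otherwise the one given there.
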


	\begin{proof} 
Let $F_n= P_{n-1}+v_0$ be a fan of order $n ( n $ \ is \ even) with $V(F_n)= \{v_0, v_1, v_2,  \cdots , v_{n-1}  \}$. \\
Let 
$D_{vc}$ be the  vertex cover set. Then, $D_{vc}= \{v_0, v_2, v_4, v_6, \cdots , v_{n-2}  \}. $ Consider  the following configuration of pebbles on the vertices of fan  $F_n$. \\
Let the initial configuration be $  \sum_{i = 0}^{ {\frac{n}{2}}  -2} 2^{2(i)+1} + 1$ pebbles.  Assume $v_1$ to be the source vertex. The detour monophonic distance from $v_1$ to any other vertex is at most $n-2$. Then using  $2^{n-3}$ pebbles we could place a pebble on $v_{n-2}$. Then we place $2^{n-5}$ pebbles on $v_{n-4}$. In a similar way, we distribute the pebbles on the vertices of $D_{vc}$.  Now, there remains only one pebble on $v_2$ leaving  $v_1$ pebbleless. Thus, we fail to cover all the vertices of $D_{vc}$. Therefore, \\
$\mu_{vc}(F_n) \geq  \sum_{i = 0}^{ {\frac{n}{2}}  -2} 2^{2(i)+1}+ 2$.\\
 Now we prove the sufficient condition by using $\sum_{i = 0}^{ {\frac{n}{2}}  -2} 2^{2(i)+1}+ 2$ pebbles on the vertices of $F_n $.\\

  \noindent{\bf Case 1:} 
 Let  ${\dot{s}}= v_1 \ \text{or} \  v_{n-1} $. \\
 Without loss of generality, let $v_n$ be the source vertex. Then $D^1_{vc}=\{ v_{0}, v_{2}, v_{4}, v_{6}, \cdots, \\ v_{n-6}, v_{n-4}, v_{n-2}   \}$.

 \begin{table}[h] 
 	\centering 
 	\begin{tabular}{|c|c|c|c|c|c|c|c|c|} \hline
 		&$v_0$	&$v_2$	&$v_{4}$	&$v_{6}$	&$\cdots$ &$v_{n-6}$	&$v_{n-4}$ &$v_{n-2}$ 
 		\\ \hline
 		$v_1$ &$1$  &$1$ &$3$ & $5$ & $\cdots$ &$n-7$ &$n-5$ &$n-3$   \\ \hline
 		
 		$v_{n-1}$ &$1$ &$n-3$ &$n-5$ &$n-7$  & $\cdots$ & $5$ &$3$ &$1$ \\ \hline
 	\end{tabular}
 	\vspace{0.2cm}
 	\caption{Detour monophonic distance from $v_{1}$ and $v_{n-1}\  to \ V(D^1_{vc})$.}
 	\label{tab11}
 \end{table} 
 From Table 
 \ref{tab11}, we use $2$ pebbles to cover $v_0$. Also, we ues  $2^{n-3}$, $2^{n-5}$, $2^{n-7}$ pebbles to cover $v_0$, $v_2$, $v_{4}$ and $v_{6}$ respectively. Proceeding in the same way we need $2^5, 2^3$ and $2$ pebbles to cover the vertices $v_{n-6}$, $v_{n-4}$ and $v_{n-2}$. Thus, the following series of pebbles guarantees covering all the vertices of $D^1_{vc}$.\\ 
 $ \mu_{vc} (G, v_0) + \mu_{vc} (G, v_2) + \mu_{vc} (G, v_{4}) + \mu_{vc} (G, v_6) + \cdots + \mu_{vc} (G,v_{ n-6}) + \mu_{vc} (G,v_{ n-4}) + \mu_{vc} (G,v_{ n-2}) $\\
 $=2^{1}+ 2^{n-3}+2^{n-5}+2^{n-7}+ \cdots +  2^{5} + 2^{3} + 2^{1}$.  Thus,\\
 	$  \sum_{i = 0}^{ {\frac{n}{2}}  -2} 2^{2(i)+1}+2$ pebbles are used to cover $D^1_{vc}$.  \\

 \noindent{\bf Case 2:} 
 
 Consider  ${\dot{s}= v_{i},  i={ \frac{n}{2} } } $.\\
 \noindent{\bf Subcase 2.1:} 
 If
 $n \equiv 2 (mod ~4)$ and then
 $D^2_{vc}= \{v_0, v_2, v_4, v_6, \cdots ,v_{\frac{n}{2}-3},	v_{\frac{n}{2}-1}, v_{\frac{n}{2}+1}, \\ v_{\frac{n}{2}+3},\cdots, v_{n-4}, v_{n-2}  \}.$ \\
 
 \begin{table}[h]
 	\centering 
 	\begin{tabular}{|c|c|c|c|c|c|c|c|c|c|c|c|c|c|} \hline
 		&$v_0$	&$v_2$	&$v_{4}$	&$v_{6}$ 	&$\cdots$ &$v_{\frac{n}{2}-3}$	&$v_{\frac{n}{2}-1}$ &$v_{\frac{n}{2}+1}$ &$v_{\frac{n}{2}+3}$ & $\cdots$  &$v_{n-4}$ & $v_{n-2}$ 
 		\\ \hline 
 		$v_{\frac{n}{2}}  $ &$1$ & ${\frac{n}{2}-2}$ & ${\frac{n}{2}-4}$ & ${\frac{n}{2}-6}$ 	&$\cdots$ & $3$ & $1$ & $1$ & $3$	&$\cdots$  & ${\frac{n}{2}-4}$  & ${\frac{n}{2}-2}$   	\\ \hline
 	 	\end{tabular}
 	\vspace{0.2cm}
 	\caption{Detour monophonic distance from $v_{\frac{n}{2} }$ of $ F_{n}$   to  $V(D^2_{vc})$, where $ n \equiv 2 (mod ~4)$ .}
 	\label{tab12}
 \end{table} 
 
 From Table
 \ref{tab12}, we use $2^{\frac{n}{2}-2} $, $2^{\frac{n}{2}-4} $   pebbles to cover  $v_2$, $v_{n-2}$ and $v_4$, $v_{n-4}$ respectively. Obviously. $v_0$ is covered with 2 pebbles.  We continue in this manner and cover the vertices of $D^2_{vc}$ with the following pebble distribution. \\
 
  $ \mu_{vc} (G, v_0) +\mu_{vc} (G, v_2) + \mu_{vc} (G, v_{4}) + \mu_{vc} (G, v_6) + \cdots+ \mu_{vc} (G, v_{\frac{n}{2}-3})\mu_{vc} (G, v_{\frac{n}{2}-1})+\mu_{vc} (G, v_{\frac{n}{2}+1})+\mu_{vc} (G, v_{\frac{n}{2}+3})+ \cdots  + \mu_{vc} (G,v_{ n-4}) + \mu_{vc} (G,v_{ n-2}) $.\\
  
  $=2^{1} + 2^{\frac{n}{2}-2} + 2^{\frac{n}{2}-4}+2^{\frac{n}{2}-6}   + \cdots +  2^{3} + 2^{1} + 2^{1}+ 2^{3}+ \cdots +  2^{\frac{n}{2}-4}+  2^{\frac{n}{2}-2}$. \\
  Thus in order to cover the vertices of $D^2_{vc}$ we used \\
 $2\big(\sum_{j = 1} ^{\lfloor {\frac{n}{4}} \rfloor } 2^{2(j)-1} \big) +2$ pebbles, 
 which is less than the total number of pebbles available. \\
 
  \noindent{\bf Subcase 2.2:} 
 Let $ n \equiv 0 (mod ~4)$. Without loss of generality $D^3_{vc}= \{v_0, v_2, v_4, v_6, \cdots ,\\ v_{\frac{n}{2}-2},	v_{\frac{n}{2}}, v_{\frac{n}{2}+2},   \cdots , v_{n-4}, v_{n-2}  \} $.\\
 \begin{table}[h]
 	 	\centering 
 	 	\begin{tabular}{|c|c|c|c|c|c|c|c|c|c|c|c|c|} \hline
 	&$v_0$	&$v_2$	&$v_{4}$	&$v_{6}$ 	&$\cdots$ &$v_{\frac{n}{2}-2}$	&$v_{\frac{n}{2}}$ &$v_{\frac{n}{2}+2}$ & $\cdots$  &$v_{n-4}$ & $v_{n-2}$ 
 		\\ \hline 
 		$v_{\frac{n}{2}}  $ &$1$ & ${\frac{n}{2}-2}$ & ${\frac{n}{2}-4}$ & ${\frac{n}{2}-6}$ 	&$\cdots$ & $2$ & $0$ & $2$	&$\cdots$  & ${\frac{n}{2}-4}$  & ${\frac{n}{2}-2}$   	\\ \hline
 		
 	\end{tabular}
 	\vspace{0.2cm}
 	\caption{Detour monophonic distance from the middle vertex of $ F_{n}$ \  to \ $V(D^3_{vc})$, where $ n \equiv 0 (mod ~4)$ .}
 	\label{tab13}
 \end{table} 
 
  From Table 
 \ref{tab13}, $v_0$ is covered with 2 pebbles.  We use  $2^{\frac{n}{2}-2} $, $2^{\frac{n}{2}-4} $  and $2^{\frac{n}{2}-6} $ pebbles to cover $v_2$, $v_4$ and $v_6$. Similarly we cover the vertices of $D^3_{vc}$ with the following pebble distributions. \\
 $ \mu_{vc} (G, v_0) +\mu_{vc} (G, v_2) + \mu_{vc} (G, v_{4}) + \mu_{vc} (G, v_6) + \cdots+\mu_{vc} (G, v_{\frac{n}{2}-2})+\mu_{vc} (G, v_{\frac{n}{2}})+\mu_{vc} (G, v_{\frac{n}{2}+2})+ \cdots  + \mu_{vc} (G,v_{ n-4}) + \mu_{vc} (G,v_{ n-2}) $.\\
 $=2^{0} + 2( 2^{2} + 2^{4}+ 2^{6}+ \cdots + 2^{ {\frac{n}{4}}  -1})+2^1$. \\
 
 Thus  we used $  
 2\big(\sum_{k = 1 } ^{{ {\frac{n}{4}}  }-1} 2^{k} \big)+3$, which is less than $\sum_{i = 0}^{ {\frac{n}{2}}  -2} 2^{2(i)+1}+2$.  \\
 
  \noindent{\bf Case 3:} 
 Consider ${\dot{s}}$ is an internal vertex.
 Let  ${\dot{s}= v_{i} }$, where $2 \leq i  \leq n-2$,  $i \neq {{\lfloor {\frac{n-1}{2} } \rfloor}+1} $. \\
 
 \noindent{\bf Subcase 3.1:} 
 When $i$   is even.\\
 Let $ V(D^4_{vc})= \{v_0, v_2$,	$v_{4}$	, $v_{6}$ 	, $\cdots$ , $v_{i-2}$	, $v_{i}$	, $v_{i+2}$ ,  $\cdots$ , $v_{n-4}$ ,  $v_{n-2}\} $.\\
 \begin{table}[h]
 	\centering 
 	\begin{tabular}{|c|c|c|c|c|c|c|c|c|c|c|c|} \hline
 &$	v_0$	&$v_2$	&$v_{4}$	&$v_{6}$ 	&$\cdots$ 	&$v_{i-2}$	 &$v_{i}$ & $v_{i+2}$  &$\cdots$ &$v_{n-4}$ & $v_{n-2}$ 
 		\\ \hline
 		$v_i$ &${1}$ &$|i-2|$ &$|i-4|$  &$|i-6|$ 	&$\cdots$ &${2}$ & ${0}$ &${2}$ &$\cdots$ & ${n-(i+4)}$	& ${n-(i+2)}$	\\ \hline 
 		
 	\end{tabular}
 	\vspace{0.2cm}
 	\caption{Detour monophonic distance from $v_i$ of  $F_n$
 		to $ V(D^4_{vc})$.}
 	\label{tab14}
 \end{table} 
 From Table 
 \ref{tab14}, when $i=2$, we use  2, $2^{0} $, $2^{2} $, $2^{6}, \cdots, 2^{n-6}$ and $2^{n-4}$ pebbles to cover $v_0$, $v_2$, $v_4$, $v_6, \cdots$, $v_{n-4}$ and $v_{n-6}$ respectively. when $i=4$, we use $2^{0} $, $2^{2} $, $2^{0} $,    $2^{2}, \cdots, 2^{n-8}$ and $2^{n-5}$ pebbles to cover $v_2$, $v_4$,  $v_6, \cdots$, $v_{n-4}$ and $v_{n-6}$ respectively. Continuing this process, when $i$ is even, we cover the vertices of $D^4_{vc}$   with the following pebble distribution. \\
 $ \mu_{vc} (G, v_0) + \mu_{vc} (G, v_2) + \mu_{vc} (G, v_{4}) + \mu_{vc} (G, v_6) + \cdots +   \mu_{vc} (G,v_{i-2})  + \mu_{vc} (G,v_{ i}) +  \mu_{vc} (G,v_{ i+2}) + \cdots+  \mu_{vc} (G,v_{ n-4})  + \mu_{vc} (G,v_{ n-2}) $.\\
 $ = \sum_{l = 1 } ^{ {\frac{i}{2}}  -1} 2^{2(l)} +  \sum_{m = 0} ^{ {\frac{{n}- (i+2)}{2}} } 2^{2(m)}+2 $.\\
 
 \noindent{\bf Subcase 3.2:} 
 When $i$   is odd.\\
 Consider $ V(D^5_{vc})= \{v_0, v_2$,	$v_{4}$	, $v_{6}$ 	, $\cdots$ , $v_{i-3}$	, $v_{i-1}$	, $v_{i+1}$ ,  $v_{i+3}$  , $\cdots$ , $v_{n-4}$ ,  $v_{n-2}\} $.
 \begin{table}[h]
 	\centering 
 	\small
 	\begin{tabular}{|c|c|c|c|c|c|c|c|c|c|c|c|c|c|} \hline
 	&$v_0$	&$v_2$	&$v_{4}$	&$v_{6}$ 	&$\cdots$ &$v_{i-3}$	&$v_{i-1}$	 &$v_{i+1}$ & $v_{i+3}$  &$\cdots$ &$v_{n-4}$ & $v_{n-2}$ 
 		\\ \hline
 		$v_i$ &$1$ &$|i-2|$ &$|i-4|$  &$|i-6|$ 	&$\cdots$ &${3}$ &${1}$ & ${1}$ &${3}$ &$\cdots$ & ${n-(i+4)}$	& ${n-(i+2)}$	\\ \hline 
 		
 	\end{tabular}
 	\vspace{0.2cm}
 	\caption{Detour monophonic distance from $v_i$ of  $F_n$
 		to $ V(D^5_{vc})$.}
 	\label{tab15}
 \end{table} 
 
 When $i$ is odd,
 we cover the vertices of $D^5_{vc}$ with the following pebble distribution. \\
 $ \mu_{vc} (G, v_{0}) +\mu_{vc} (G, v_{i-1}) + \mu_{vc} (G, v_{i-3}) + \mu_{vc} (G, v_{4}) + \mu_{vc} (G, v_6) + \cdots + \mu_{vc} (G,v_{ n-6}) + \mu_{vc} (G,v_{i-1})  + \mu_{vc} (G,v_{ i+1}) +  \mu_{vc} (G,v_{ i+3}) + \cdots + \mu_{vc} (G,v_{ n-4})  + \mu_{vc} (G,v_{ n-2})$.\\
 $=2^{1}+ 2^{1}+ 2^{3} + \cdots + 2^{i-4}+ 2^{i-2}+  2^{1} + 2^{3} + \cdots + 2^{n-(i+4)}+ 2^{{n-(i+2)}}$.
 Thus, we have ended up with \\
 $ \sum_{l = 1 } ^{\lfloor {\frac{i}{2}} \rfloor } 2^{2(l)-1} +
 \sum_{m = 0 } ^{\lfloor {\frac{{n}- (i+2)}{2}} \rfloor } 2^{2(m)+1}+2 $ pebbles 
 which is less than the total number of pebbles available.\\
 
  	\noindent{\bf Case 4:}  
 
 Consider the hub vertex as the source  vertex.
 Then ${\dot{s}= v_{0} }$ and \\$D^6_{vc}=\{ v_{n-2}, v_{n-4}, v_{n-6},\cdots, v_2, v_0   \}$. \\
 The detour monophonic distance from any other vertex of $F_n$ to $v_0$ is 1. Thus in order to cover the vertices of $D^6_{vc}$, we have the  pebble distribution that sums up as follows.
 $\mu_{vc} (G, v_0) + \mu_{vc} (G, v_2) + \mu_{vc} (G, v_{4}) + \cdots + \mu_{vc} (G,v_{ n-6}) + \mu_{vc} (G,v_{ n-4}) + \mu_{vc} (G,v_{ n-2}) $\\
 $= 2^0+ 2^1+2^1+\cdots +2^1+2^1+2^1$\\
 $ 2 { \lfloor {\frac{n-1}{2}} \rfloor } + 1$ which is less than the considered number of pebbles.\\ 
   Hence,  \\ $\mu_{vc}(F_n) = \sum_{i = 0}^{ {\frac{n}{2}}  -2} 2^{2(i)+1}+2$, when $n$ is even.\\
 
 \end{proof}

\begin{theorem} \label{3a}
	For Fan $F_n   (n\ is \ odd ),  
	\mu_{vc}(F_n) =
				\sum_{j = 0}^{ \lfloor {\frac{n}{2}} \rfloor -1 } 2^{2(j)} +  2$.
		\end{theorem}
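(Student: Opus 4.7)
The plan is to mirror the structure of Theorem \ref{thm3} with the parity of $n$ reversed. Write $F_n=P_{n-1}+v_0$ with $V(F_n)=\{v_0,v_1,\dots,v_{n-1}\}$; since $n$ is odd, $P_{n-1}$ has an even number of vertices, and I would fix $D_{vc}=\{v_0,v_1,v_3,v_5,\dots,v_{n-2}\}$, i.e., the hub together with a minimum vertex cover of the path component supported on odd indices. The structural fact to record at the outset is the detour monophonic distance in $F_n$: because the hub is universal, any path using $v_0$ as an internal vertex picks up a chord as soon as it visits a second path vertex, so $D_m(v_0,v_i)=1$ for every $i\ge 1$, and $D_m(v_i,v_j)=|i-j|$ for $i,j\ge 1$ with $|i-j|\ge 2$, while $D_m(v_i,v_{i+1})=1$.

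For the lower bound, place $\sum_{j=0}^{\lfloor n/2\rfloor-1}2^{2j}+1$ pebbles on ${\dot{s}}=v_1$ and argue by the stacking theorem as in Theorem \ref{thm3}. The distances from $v_1$ to the elements of $D_{vc}$ read $0,1,2,4,6,\dots,n-3$, so the cost of covering all of $D_{vc}$ from a single stack at $v_1$ is $1+2+2^2+2^4+\cdots+2^{n-3}=\sum_{j=0}^{\lfloor n/2\rfloor-1}2^{2j}+2$; being short by one pebble leaves a vertex of $D_{vc}$ uncovered, giving $\mu_{vc}(F_n)\ge\sum_{j=0}^{\lfloor n/2\rfloor-1}2^{2j}+2$.

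For the upper bound, place this many pebbles on $V(F_n)$ and split on the location of the source ${\dot{s}}$ into the same four cases used in Theorem \ref{thm3}. Case~1: ${\dot{s}}\in\{v_1,v_{n-1}\}$, where the computation above is tight; the case ${\dot{s}}=v_{n-1}$ is handled by invoking the mirror-image minimum cover $\{v_0,v_2,v_4,\dots,v_{n-1}\}$ and relabeling. Case~2: ${\dot{s}}$ is the middle vertex of the path component, treated in subcases $n\equiv 1\pmod 4$ and $n\equiv 3\pmod 4$; the distance array is V-shaped about ${\dot{s}}$ and the totals come out to $2\bigl(\sum_{j=1}^{\lfloor n/4\rfloor}2^{2j-1}\bigr)+2$ or $1+2\bigl(\sum_{k=1}^{\lfloor n/4\rfloor}2^{2k}\bigr)+2$, each strictly below the bound. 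Case~3: ${\dot{s}}=v_i$ is another internal path vertex, split by parity of $i$; the cost becomes the sum of two disjoint geometric series (one on each side of $v_i$) plus the $+2$ for covering $v_0$, each bounded by the claimed number. Case~4: ${\dot{s}}=v_0$, where every other vertex is at DM distance $1$, so the cost is merely $1+2(|D_{vc}|-1)=n$, well below the bound. In each case I would tabulate the DM distances as in Tables \ref{tab11}--\ref{tab15}, sum $\sum_{u\in D_{vc}}2^{D_m(u,{\dot{s}})}$, and verify the total is at most $\sum_{j=0}^{\lfloor n/2\rfloor-1}2^{2j}+2$.

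The main obstacle is algebraic bookkeeping rather than any structural insight. Compared with Theorem \ref{thm3}, the index parities of $D_{vc}$ and of the natural middle vertex shift by one, so every geometric sum changes the parity of its exponents, and one must carefully track whether the central term of a V-shaped distance array is $2^0=1$ or $2^1=2$; this is exactly where the $+2$ summand in the bound is consumed. The tightest subcase is Case~1 with ${\dot{s}}=v_1$, which demands equality rather than a strict inequality, so the sufficiency argument there must recover the full formula exactly and not merely an upper estimate.
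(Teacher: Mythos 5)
Your proposal is correct and follows exactly the route the paper intends: the paper's own ``proof'' of this theorem is the single sentence that it is similar to Theorems \ref{thm2} and \ref{thm3}, and your write-up is precisely that argument carried out, with the hub adjoined to the odd-indexed cover of $P_{n-1}$, the distance array $0,1,2,4,\dots,n-3$ from $v_1$ summing to $2^0+2^1+\sum_{j=1}^{\lfloor n/2\rfloor-1}2^{2j}=\sum_{j=0}^{\lfloor n/2\rfloor-1}2^{2j}+2$, and the same four-case split on the source vertex. Since you supply the details the paper omits (and they check out, e.g.\ the middle-vertex total $2\bigl(\sum_{j=1}^{\lfloor n/4\rfloor}2^{2j-1}\bigr)+2$ is indeed strictly below the bound), no changes are needed.
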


\begin{proof} 
The proof is similar to that of Theorems \ref{thm2} and \ref{thm3}.
\end{proof} 

\begin{theorem} \label{thm4}
	For  Cycle  $C_n  (n \geq 3), \\ {\mu_{vc}}(C_n)$ =
	$\begin{cases}
		2\big(\sum_{i = \frac{n}{4} + 1}^{\frac{n}{2} -1} 2^{2(i)} \big) + 2^{\frac{n}{2} } + 1, & \text{if } n \ is \ \text{even } \text{and }   n \equiv 0 (mod ~4)\\
		
		2\big(\sum_{j = { \lfloor \frac{n}{4} \rfloor } + 1}^{\frac{n}{2}-1} 2^{2(j)} \big) +  1, & \text{if } n \ is \ \text{even } \text{and } 
		\ n \equiv 2 (mod ~4)\\
		{\sum_{k = {\lfloor \frac {n}{2} \rfloor}+1}^{n-2} 2^{k}} + 3,  &  \text{if } n \ is \ \text{odd }.
		
	\end{cases}$
	
\end{theorem}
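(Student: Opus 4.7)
My plan is to follow the template of Theorems~\ref{thm1} and~\ref{thm2}: split the argument into the three parity cases stated, in each case choose an explicit vertex cover $D_{vc}$, establish the lower bound via a stacking-theorem argument, and then verify the upper bound by computing the detour monophonic distances from a chosen source vertex to each member of $D_{vc}$. By the vertex-transitivity of $C_n$ it suffices to take $\dot{s} = v_1$. The crucial geometric fact I will use is that in a cycle any two non-adjacent vertices $u,v$ are joined by exactly two chordless arcs, so $D_m(u,v) = n - d(u,v)$ where $d$ is the ordinary cycle distance; for adjacent vertices the pebbling cost along the incident edge is $2^1 = 2$.

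For each case I would select $D_{vc}$ so that $\dot{s}$ itself lies in it (contributing a single pebble). When $n \equiv 0 \pmod{4}$ the alternating cover $\{v_1, v_3, \ldots, v_{n-1}\}$ contains the unique vertex $v_{n/2+1}$ antipodal to $v_1$, which sits at detour distance $n/2$ and contributes the isolated term $2^{n/2}$; the remaining cover vertices pair up symmetrically around this antipode at distances $n/2+2, n/2+4, \ldots, n-2$, producing the factor $2$ in the sum. When $n \equiv 2 \pmod{4}$ the same cover is used, but now the antipode is even-indexed and is not in $D_{vc}$, so every non-source cover vertex has a symmetric twin and only paired terms appear. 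For odd $n$ the minimum cover must include two consecutive vertices; choosing these to be $v_1$ and $v_n$ ensures that $v_n$ is served at the cheapest possible cost $2^1 = 2$ (accounting for the ``$+3$'' in the formula), while $v_3, v_5, \ldots, v_{n-2}$ occupy the distinct detour distances $\lfloor n/2 \rfloor + 1, \lfloor n/2 \rfloor + 2, \ldots, n-2$, giving exactly one power-of-two term at each such exponent.

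The lower bound in each case mirrors the previous proofs: placing $\mu_{vc}(C_n) - 1$ pebbles on $v_1$ makes the required total to cover $D_{vc}$ along detour monophonic paths fall short by exactly one pebble, so at least one cover vertex is unreachable. For the upper bound I would tabulate $D_m(v_1, w)$ for $w \in D_{vc}$ (analogous to Table~\ref{tab1}) and confirm that $\sum_{w \in D_{vc}} 2^{D_m(v_1, w)}$, together with the initial pebble at $v_1$, matches the stated closed form; subcases where $\dot{s}$ is placed at a different relative position in the cycle would then follow by rotational symmetry, requiring only that the resulting totals do not exceed the worst-case value just computed.

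The principal obstacle I expect is the odd case: because odd cycles admit no alternating cover of half the order, $D_{vc}$ must include two consecutive vertices, and the choice of this pair affects the total. Justifying that placing the pair adjacent to the source (namely $\{v_1, v_n\}$) minimises $\sum_w 2^{D_m(v_1, w)}$ requires a short combinatorial check; shifting the pair elsewhere replaces the cheap term $2^1$ with an expensive $2^{n-2}$ contribution while saving only a single middling power of two, which is strictly worse. Once this choice is fixed, the sums collapse to the geometric series in the statement by the same telescoping as in the path theorems.
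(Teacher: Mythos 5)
Your proposal follows essentially the same route as the paper: the same case split by the residue of $n$ modulo $4$, the same alternating vertex covers, a stacking argument on a single source for the lower bound, and a table of detour monophonic distances from $v_1$ (extended to all sources by rotational symmetry) for the upper bound. You additionally make explicit two points the paper leaves implicit --- the identity $D_m(u,v) = n - d(u,v)$ for non-adjacent vertices of $C_n$, and the need to check in the odd case that the adjacent pair of cover vertices is optimally placed next to the source --- so the plan is sound and, if anything, slightly more careful than the published argument.
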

	
\begin{proof}
	Let $C_n$ be an even cycle of order $n (n \geq 3)$. \\
		
	\noindent{\bf Case 1:} 
	
	When $n \equiv 0 (mod ~4)$.\\
	Consider the detour monophonic vertex cover set $D^1_{vc} = \{ v_{i+ 2(j)} | i= 1,2,3,\cdots,n; j= 0,1,2,\cdots,{ \frac {n}{4} -1}, { \frac{n}{4}},  { \frac{n}{4}+1}, \cdots,  { \frac{n}{2}-2}, { \frac{n}{2}-1} \},$ the indices run over modulus $n$.  Thus $|D^1_{vc}|= \frac{n}{2}$.
	
	Let ${\dot{s}}  = v_1$ and  the initial configuration be	$ 2\big(\sum_{i = \frac{n}{4} + 1}^{\frac{n}{2} -1} 2^{2i} \big) + 2^{\frac{n}{2} } $. 
	In order to cover the vertices    $v_{(n-1)}$ we require  $2^ {n - 2}$ pebbles  and to cover  $v_{(n-3)}$ we require  $2^ {n - 4}$ pebbles respectively using the detour monophonic path. Similar process can be carried on to cover the remaining vertices of $D^1_{vc}$ using the following series of pebble distribution that sums up \\
	$
	2^{n-2}+ 2^{n-4}+ \cdots + 2^{\frac{n}{2}+2 }+ 2^{\frac{n}{2}}+2^{\frac{n}{2}+2 }+ \cdots + 2^{n-2}+2^{n-4} $\\
	$= 2( 2^{n-2}+ 2^{n-4}+\cdots +2^{{\frac{n}{2}+2 }} )+2^{\frac{n}{2}}$ \\
	$ = 2\big(\sum_{i = \frac{n}{4} + 1}^{\frac{n}{2} -1} 2^{2i} \big) + 2^{\frac{n}{2} }$.\\ 
	Thus we can cover the vertices $v_3$ and $v_5$ with $2^ {n - 2}$ and $2^ {n - 4}$ pebbles and we are left with no pebble to cover the source vertex $v_1$. Therefore, the detour monophonic vertex cover number ${\mu_{vc}}(C_n) \geq 2\big(\sum_{i = \frac{n}{4} + 1}^{\frac{n}{2} -1} 2^{2(i)} \big) + 2^{\frac{n}{2} } + 1$.  
	
	\begin{table}[h]
		\centering 
		\begin{tabular}{|c|c|c|c|c|c|c|c|c|c|c|} \hline
			&$v_i$	&$v_{i+2}$	&$v_{i+4}$	&$\cdots$ &$v_{{\frac{n}{2}+i-2}}$	&$v_{\frac{n}{2}+i}$ &$v_{{\frac{n}{2}+i+2}}$ & $\cdots$ &$v_{n+i-4}$ &$v_{n+i-2}$
			\\ \hline
			$v_i$ &$0$ &$n-2$ &$n-4$ &$\cdots$ &$\frac{n}{2}+2$ &$\frac{n}{2}$ &$\frac{n}{2}+2$  &$ \cdots $ &$n-4$ &$n-2$   \\ \hline
			
		\end{tabular}
		\vspace{0.2cm}
		\caption{Detour monophonic distance from $v_{i} \  to \ V(D^1_{vc})$.}
		\label{tab16}
	\end{table} 
	Obviously $v_i$ is covered with one pebble. From Table 
	\ref{tab16}, in order to cover the vertices   $v_{i+2}$ and $v_{n+i-2}$ we require  $2^ {n - 2}$ pebbles each and to cover  $v_{i+4}$ and $v_{n+i-4}$ we require  $2^ {n - 4}$ pebbles each. 
	Similarly  to cover $v_{{\frac{n}{2}+i-2}}$ and $v_{{\frac{n}{2}+i+2}}$, we require $ 2^ {\frac{n}{2} +2}$ pebbles. We need $2^ {\frac{n}{2}}$  
	pebbles to cover the vertex $v_{i+{\frac{n}{2}}}$. Thus in order to cover the vertices in $D^1_{vc}$ we distribute the pebbles as follows.
	$\mu_{vc} (G, v_i) + \mu_{vc} (G, v_{i+2}) +  \mu_{vc} (G, v_{i+4})+ \cdots + \mu_{vc}(G, v_{{\frac{n}{2}+i-2}} )+\mu_{vc} (G, v_{{\frac{n}{2}+i}}) + \mu_{vc} (G, v_{{\frac{n}{2}+i+2}} ) + \cdots +\mu_{vc} (G, v_{n+i-4})+\mu_{vc} (G, v_{n+i-2}) $. \\
	$ = 2^0+ 
	2^{n-2}+ 2^{n-4}+ \cdots + 2^{\frac{n}{2}+2 }+ 2^{\frac{n}{2}}+2^{\frac{n}{2}+2 }+ \cdots + 2^{n-4}+2^{n-2} $\\
	$=1 + 2( 2^{n-2}+ 2^{n-4}+\cdots +2^{{\frac{n}{2}+2 }} )+2^{\frac{n}{2}}$ \\
	$= 2\big(\sum_{i = \frac{n}{4} + 1}^{\frac{n}{2} -1} 2^{2i} \big) + 2^{\frac{n}{2} } + 1 $.\\
	Since all the vertices are of same detour monophonic distance, the number of pebbles required to place on any $v_i \in V(D^1{vc})$ is $2\big(\sum_{i = \frac{n}{4} + 1}^{\frac{n}{2} -1} 2^{2(i)} \big) + 2^{\frac{n}{2} } + 1 $.\\
		Thus,   $\mu_{vc} (C_n) =  2\big(\sum_{i = \frac{n}{4} + 1}^{\frac{n}{2} -1} 2^{2i} \big) + 2^{\frac{n}{2} } + 1 $.\\ 
		
	\noindent{\bf Case 2:} 
	
	When $n$ is even and $n \equiv 2 (mod ~4)$, let $D^2_{vc} = \{ v_{i+ 2j} |i= 1,2,3,\cdots,n; j= 0,1,2,\cdots, \lfloor { \frac{n}{4}-1}	\rfloor,  \lfloor {\frac{n}{4}} \rfloor,  \lfloor { \frac{n}{4}+1} \rfloor, \cdots,  { \frac{n}{2}-2}, { \frac{n}{2}-1} \}$ be the vertex cover set, where the indices run over modulus $n$. Thus, $|D^2_{vc}|= \frac{n}{2}$. 
	Now consider the configuration  which encages $ 2\big(\sum_{j = {\lfloor {\frac{n}{4}} \rfloor} + 1}^{\frac{n}{2} -1} 2^{2(j)} \big)   $ pebbles to cover the vertices of $C_n$. To cover the vertices $v_{i+2}$ and $v_{i+n-2}$, we need $2^{n-2}$ pebbles. Similarly to cover the vertices $v_{i+4}$ and $v_{i+n-4}$, we need $2^{n-4}$ pebbles. We use $2^{\frac{n}{2}+1}$ pebbles to cover $v_{i+2(\lfloor {\frac{n}{4}} \rfloor )}$ and $v_{i+{2(\lfloor {\frac{n}{4}} \rfloor  +1) }}$. Obviously, we require a pebble to cover  ${\dot{s}}$. 
		Thus,   $\mu_{vc} (C_n) \geq   2\big(\sum_{j = {\lfloor {\frac{n}{4}} \rfloor} + 1}^{\frac{n}{2} -1} 2^{2(j)} \big)  + 1  $.\\
	
	\begin{table}[h]
	\centering 
	\begin{tabular}{|c|c|c|c|c|c|c|c|c|c|c|c|} \hline
		&$v_i$	&$v_{i+2}$	&$v_{i+4}$	&$\cdots$ &$v_{{\frac{n}{2}+i-3}}$  &$v_{{\frac{n}{2}+i-1}}$	&$v_{\frac{n}{2}+i+1}$ &$v_{{\frac{n}{2}+i+3}}$ & $\cdots$ &$v_{n+i-4}$ & $v_{n+i-2}$
		\\ \hline
		$v_i$ &$0$ &$n-2$ &$n-4$ &$\cdots$ &$\frac{n}{2}+3$ &$\frac{n}{2}+1$ &$\frac{n}{2}+1$ &$\frac{n}{2}+3$  & $ \cdots $ &$n-4$ &$n-2$   \\ \hline

		\end{tabular}
		\vspace{0.2cm}
		\caption{Detour monophonic distance from $v_{i} \  to \ V(D^2_{vc})$.}
		\label{tab17}
	\end{table} 
	
	Clearly only one pebble suffices to cover the vertex $v_i (i= 1,2,3,\cdots,n)$. From Table \ref{tab17},  all  the
	vertices of $D^2_{vc}$ are covered by the following series of pebble distribution. 
	
	$ \mu_{vc} (G, v_i) + \mu_{vc} (G, v_{i+2}) +  \mu_{vc} (G, v_{i+4})+ \cdots + \mu_{vc} (G, v_{{\frac{n}{2}+i-3}} ) + \mu_{vc} (G, v_{{\frac{n}{2}+i-1}} )+\mu_{vc} (G, v_{{\frac{n}{2}+i+1}})  +\mu_{vc} (G, v_{{\frac{n}{2}+i+3}})  + \cdots + \mu_{vc} (G, v_{n+i-4})+\mu_{vc} (G, v_{n+i-2}) $. \\
	$ = 2^0+ 
	2^{n-2}+ 2^{n-4}+ \cdots + 2^{\frac{n}{2}+1 }+ 2^{\frac{n}{2}+1 }+ \cdots + 2^{n-2}+2^{n-4} \\
	1 + 2( 2^{n-2}+ 2^{n-4}+\cdots +2^{2({\frac{n}{2} -1})}+ 2^{2({\frac{n}{2} -2})}+\cdots +2^{2(\lfloor {\frac{n}{4}} \rfloor  +1) } )$\\
	Thus,   $  2\big(\sum_{j = {\lfloor {\frac{n}{4}} \rfloor}+ 1}^{\frac{n}{2} -1} 2^{2(j)} \big)  + 1 $.\\
	Since all the vertices of $C_n$ are at equivi distance, the result holds for any $v_i \in V(C_n)$, when $n$ is even and $n \equiv 2 (mod ~4)$. Hence, $\mu_{vc} (C_n) =  2\big(\sum_{j = {\lfloor {\frac{n}{4}} \rfloor} + 1}^{\frac{n}{2} -1} 2^{2(j)} \big)  + 1 $.\\\\
	
	\noindent{\bf Case 3:} 
	When $n$ is odd, assume the detour monophonic cover set $D^3_{vc} = \{ v_{i+j} | i= 1,2,3,\cdots,n; j= 0,1,2,\cdots, n-1 \}$, where all  the indices run over modulus $n$  and  $|D_{vc}|= {\lfloor \frac{n}{2} \rfloor}+1$. Without loss of generality we fix  ${\dot{s}} = v_1$. 
	
	Let the initial configuration be ${\sum_{k = {\lfloor \frac {n}{2} \rfloor}+1}^{n-2} 2^{k}} + 2$.  The vertices $v_3$, $v_5$  and $v_7$ are covered with $2^{n-2}$, $2^{n-4}$ and $2^{n-5}$ pebbles respectively. To cover the vertices  $v_{ {\lfloor  {\frac{n}{2} } \rfloor } -2}$, $v_{ {\lfloor  {\frac{n}{2} } \rfloor } }$ and  $v_{ {\lfloor   {\frac{n}{2} } \rfloor } +2}$, we have to use $2^{\lfloor {\frac{n}{2}\rfloor }+4}$, $2^{\lfloor {\frac{n}{2}\rfloor }+2}$ and  $2^{{\lfloor {\frac{n}{2}\rfloor }+1}}$ pebbles on the corresponding vertices. Continuing in this manner we are left with $v_1$ uncovered. Hence, the monophonic vertex cover pebbling number
	
	$\mu_{vc} (C_n) \geq {\sum_{k = {\lfloor \frac {n}{2} \rfloor}+1}^{n-2} 2^{k}} + 3 $. \\
	If $n$ is odd and $n \equiv 1 (mod ~4)$, then $D^3_{vc}=\{	v_1, v_{3} , v_{5}, \cdots, v_{\lfloor{\frac{n}{2}\rfloor}-1} , v_{\lfloor{\frac{n}{2}\rfloor}+1}, v_{\lfloor{\frac{n}{2}\rfloor}+3}, v_{\lfloor{\frac{n}{2}\rfloor}+5}, \cdots ,  \\  v_{n-4} , v_{n-2}, v_{n}  \}$. Without loss of generality consider $v_1$ to be the source vertex. 
	
	\begin{table}[h]
		\centering 
		\begin{tabular}{|c|c|c|c|c|c|c|c|c|c|c|c|c|c|c|} \hline
			&$v_1$	&$v_{3}$	&$v_{5}$ 	&$\cdots$ &$v_{\lfloor{\frac{n}{2}\rfloor}-1}$	&$v_{\lfloor{\frac{n}{2}\rfloor}+1}$ &$v_{\lfloor{\frac{n}{2}\rfloor}+3}$ &$v_{\lfloor{\frac{n}{2}\rfloor}+5}$  &$\cdots$ &$v_{n-4}$ &$v_{n-2}$ & $v_{n}$ 
			\\ \hline
			$v_1$ &$0$ &$n-2$ &$n-4$  &$\cdots$ 
			&$\lfloor{\frac{n}{2}\rfloor}+3$ 
			&${\lfloor{ \frac {n}{2}\rfloor}+1}$ &${\lfloor{\frac{n}{2}\rfloor}+2}$  &${\lfloor{\frac{n}{2}\rfloor}+4}$  &$ \cdots $ &$n-5$ &$n-3$ & $1$  \\ \hline
		\end{tabular}
		\vspace{0.2cm}
		\caption{Detour monophonic distance from $v_{1} \  to \ V(D^3_{vc})$.}
		\label{tab18}
	\end{table} 

	If $n$ is odd and $n \equiv 3 (mod ~4)$, then $D^{3*}_{vc}=\{	v_1, v_{3} , v_{5}, \cdots, v_{\lfloor{\frac{n}{2}\rfloor}-2} , v_{\lfloor{\frac{n}{2}\rfloor}}, v_{\lfloor{\frac{n}{2}\rfloor}+2}, v_{\lfloor{\frac{n}{2}\rfloor}+4}, \cdots ,  \\  v_{n-4} , v_{n-2}, v_{n}  \}$. Without loss of generality let $v_1$  be the source vertex. 
	
	\begin{table}[h]
		\centering 
		\begin{tabular}{|c|c|c|c|c|c|c|c|c|c|c|c|c|c|} \hline
			&$v_1$	&$v_{3}$	&$v_{5}$ 	&$\cdots$ &$v_{\lfloor{\frac{n}{2}\rfloor}-2}$	&$v_{\lfloor{\frac{n}{2}\rfloor}}$ &$v_{\lfloor{\frac{n}{2}\rfloor}+2}$ &$v_{\lfloor{\frac{n}{2}\rfloor}+4}$  &$\cdots$ &$v_{n-4}$ &$v_{n-2}$ & $v_{n}$ 
			\\ \hline
			$v_1$ &$0$ &$n-2$ &$n-4$  &$\cdots$ 
			&$\lfloor{\frac{n}{2}\rfloor}+4$ 
			&${\lfloor{ \frac {n}{2}\rfloor}+2}$ &${\lfloor{\frac{n}{2}\rfloor}+1}$  &${\lfloor{\frac{n}{2}\rfloor}+3}$  &$ \cdots $ &$n-5$ &$n-3$ & $1$  \\ \hline
		\end{tabular}
		\vspace{0.2cm}
		\caption{Detour monophonic distance from $v_{1} \  to \ V(D^{3*}_{vc})$.}
		\label{tab18*}
	\end{table} 
	
	In order to cover the vertices of $C_n$, it suffices to cover the vertices of   vertex cover set.
	From Tables \ref{tab18} and \ref{tab18*},   the pebbles are distributed sums up as follows.\\

	$=2^0+2^{1} +2^ {\lfloor{\frac{n}{2}\rfloor}+1 }
	+2^{\lfloor{ \frac {n}{2}\rfloor}+2} +2^{\lfloor{\frac{n}{2}\rfloor}+3}  +2^{\lfloor{\frac{n}{2}\rfloor}+4} + \cdots + 2^{n-5}+2^{n-4}+2^{n-3}+ 2^{n-2} $\\
$	={\sum_{k = {\lfloor \frac {n}{2} \rfloor}+1}^{n-2} 2^{k}} + 3 $. \\ Thus,
	$\mu_{vc} (C_n) ={\sum_{k = {\lfloor \frac {n}{2} \rfloor}+1}^{n-2} 2^{k}} + 3 $. \\
		Since the vertices of $C_n$ are  equivi distanced, the result is true for any $v_i \in \{ v_i | 2 \leq i \leq n \}$, when $n$ is odd. 
	
	Hence proved.\\  
   \end{proof}

   \begin{theorem} \label{thm5}
	For a  Wheel $W_n$, \\ ${\mu_{vc}}(W_n)$ =
	$\begin{cases}
		 {\sum_{i = \frac {n}{2} }^{n-3} 2^{i}} + 5 ,  &  \text{if } n \ is \ \text{even }\\
		 	2\big(\sum_{j = \lfloor{\frac{n}{4}\rfloor} + 1}^{{\lfloor{\frac{n}{2}}\rfloor} -1} 2^{2(j)} \big) + 2^{\lfloor{\frac{n}{2}}\rfloor} + 3, & \text{if } n \ is \ \text{odd }\  \text{and }   n \equiv 1 (mod ~4)\\
			2\big(\sum_{k = {\lfloor {\frac{n}{4}} \rfloor} + 1}^{\lfloor{\frac{n}{2}}\rfloor -1} 2^{2(k)} \big)  + 3, & \text{if } n \ is \ \text{odd} \ \text{and } 
		\ n \equiv 3 (mod ~4)\\

			\end{cases}$
 	       \end{theorem}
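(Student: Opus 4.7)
The plan is to exploit the structural relation $W_n = C_{n-1}+v_0$ and reduce Theorem~\ref{thm5} to Theorem~\ref{thm4} via the identity $\mu_{vc}(W_n)=\mu_{vc}(C_{n-1})+2$. Throughout I would take the vertex cover to be $D_{vc}(W_n)=\{v_0\}\cup D_{vc}(C_{n-1})$: the hub $v_0$ covers every spoke $v_0v_i$, while a minimum cover of the rim handles the cycle edges. The three parity branches of Theorem~\ref{thm5} correspond, in order, to the ``$n$ odd'', ``$n\equiv 0\pmod 4$'', and ``$n\equiv 2\pmod 4$'' branches of Theorem~\ref{thm4} applied to $C_{n-1}$.

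The key structural fact I would prove first is that, for any two non-adjacent rim vertices $v_i,v_j$, the detour monophonic distance in $W_n$ equals the detour monophonic distance in $C_{n-1}$. In $W_n$ the only chordless $v_i$--$v_j$ paths are the short hub route $v_iv_0v_j$ of length $2$ and the two rim arcs; the long rim arc remains chordless in $W_n$ because $v_0$ does not lie on it, and for $n\ge 5$ its length strictly exceeds $2$. Hence every pebbling move among rim vertices costs exactly what Theorem~\ref{thm4} already counts, and the only new expense is covering the hub.

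The sufficiency argument then splits on the source ${\dot{s}}$. If ${\dot{s}}=v_0$, every other cover vertex is adjacent to it and the total cost is $1+2|D_{vc}(C_{n-1})|$, which is readily below the claimed $\mu_{vc}(W_n)$ in each subcase. If ${\dot{s}}$ lies on the rim, by rotational symmetry I may take ${\dot{s}}=v_1\in D_{vc}(C_{n-1})$; then Theorem~\ref{thm4} supplies $\mu_{vc}(C_{n-1})$ pebbles on $v_1$ that already cover $D_{vc}(C_{n-1})$, and two extra pebbles on $v_1$ execute a single adjacency shift placing one pebble on $v_0$. This gives the upper bound $\mu_{vc}(W_n)\le\mu_{vc}(C_{n-1})+2$. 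For the matching lower bound I would, by the stacking principle mentioned in the introduction, put $\mu_{vc}(C_{n-1})+1$ pebbles on $v_1$: after covering $D_{vc}(C_{n-1})$ at most one pebble remains on $v_1$, and a single pebble cannot be shifted to the adjacent hub.

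The principal obstacle is arithmetic verification rather than insight: I must check that $\mu_{vc}(C_{n-1})+2$ matches the stated closed form in each of the three branches. When $n$ is even, Theorem~\ref{thm4} gives $\sum_{k=\lfloor(n-1)/2\rfloor+1}^{n-3}2^{k}+3=\sum_{i=n/2}^{n-3}2^{i}+3$, so adding $2$ reproduces the even branch. When $n\equiv 1\pmod 4$ or $n\equiv 3\pmod 4$, the identities $\lfloor n/4\rfloor=\lfloor(n-1)/4\rfloor$ and $\lfloor n/2\rfloor=(n-1)/2$ convert the two even-cycle subcases of Theorem~\ref{thm4} (with $n$ replaced by $n-1$), after adding $2$, into the odd branches of Theorem~\ref{thm5}. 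I would finally handle the degenerate small cases $n=4$ (where $W_4=K_4$ and every move is a length-$1$ shift costing $2$ pebbles) and $n=5$ (where every detour monophonic distance equals $2$) separately, since for these the long rim arc is not strictly longer than the hub route.
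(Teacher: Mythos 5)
Your proposal is essentially correct, and it takes a genuinely different route from the paper. The paper proves Theorem~\ref{thm5} from scratch: for each congruence class of $n$ it writes down an explicit vertex cover set, tabulates the detour monophonic distances from a fixed source $v_i$ (and separately from the hub $v_0$), and sums $2^{D_m({\dot{s}},u)}$ over the cover set, exactly as in the proofs of Theorems~\ref{thm1}--\ref{thm4}. You instead isolate the structural lemma that adding the hub to $C_{n-1}$ leaves all detour monophonic distances between non-adjacent rim vertices unchanged (the long rim arc stays chordless because $v_0$ is not on it and rim vertices are adjacent only when consecutive, while the hub route has length only $2$), so that the only new expense is the two pebbles needed to reach $v_0$ from a rim source; this yields $\mu_{vc}(W_n)=\mu_{vc}(C_{n-1})+2$ and the three branches of Theorem~\ref{thm5} drop out of Theorem~\ref{thm4} by the index shifts you verify ($\lfloor (n-1)/2\rfloor+1=n/2$ for even $n$, $\lfloor (n-1)/4\rfloor=\lfloor n/4\rfloor$ and $(n-1)/2=\lfloor n/2\rfloor$ for odd $n$). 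I checked that the arithmetic in all three branches matches the paper's closed forms. What your reduction buys is a conceptual explanation of why the wheel formulas are the cycle formulas with $n$ shifted by one and the constant increased by $2$, and it avoids redoing the distance tables; what it costs is the need to handle $W_4$ and $W_5$ separately, which you correctly flag. Be aware that your argument inherits the same unproved assumption the paper relies on throughout, namely that the extremal configuration is a single stack on one vertex and that the cost of covering is additive over targets (the informal ``stacking theorem''); since the paper's own lower- and upper-bound arguments rest on exactly this, your proof is at the same level of rigour as the original, not below it.
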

	
			\begin{proof}
	
		Let $W_n= C_{n-1}+v_0$ be a wheel of order $n $ with $V(W_n)= \{v_i | \ 0 \leq i \leq n-1 \}$ with $|V(W_n)|=n$. \\
	
	\noindent{\bf Case 1:} 
When $n$ is even,
assume the detour monophonic cover set $D'_{vc} = \{ v_0, v_{i+j} | i= 1,2,3,\cdots,n-1; j= 0,1,2,\cdots, n-2 \}$, where all  the indices run over modulus $n$  and  $|D'_{vc}|= { \frac{n}{2} }+1$. Without loss of generality consider ${\dot{s}} = v_1$. 
Let the initial configuration be ${\sum_{i = \frac {n}{2} }^{n-3} 2^{i}} + 4 $ pebbles assigned  to cover the  vertices of $W_n$. Since  the detour monophonic distance between $v_0 $ and the source vertex is 1, we can cover  2 pebbles.  The vertices $v_3$, $v_5$  and $v_7$ are covered with $2^{n
	-3}$, $2^{n-5}$ and $2^{n-7}$ pebbles respectively. Proceeding in this manner we cover the all  vertices of $D'_{vc}$ except the source vertex with ${\sum_{i = \frac {n}{2} }^{n-3} 2^{i}} + 4 $ pebbles. Thus, continuing in this manner we are left with $v_1$ uncovered. Hence, \\
$\mu_{vc} (W_n)  \geq {\sum_{i = \frac {n}{2} }^{n-3} 2^{i}} + 5 $. \\
Now, consider $n$ is even and $n \equiv 0 (mod ~4)$.\\
Let the vertex cover set $ V(D^1_{vc})= \{ v_0, v_1, v_{3}, v_{5}, \cdots, v_{{\frac{n}{2}-3}}, v_{\frac{n}{2}-1}, v_{{\frac{n}{2}+1}}, v_{{\frac{n}{2}+3}}, \cdots, v_{n-3},\\ v_{n-1} \}$, where $|D^1_{vc}|= {\frac{n}{2}}+1$. Fix ${\dot{s}} = v_1$.

\begin{table}[h]
	\centering 
	\begin{tabular}{|c|c|c|c|c|c|c|c|c|c|c|c|c|} \hline
		&$v_0$	&$v_1$	&$v_{3}$	&$v_{5}$	&$\cdots$ &$v_{{\frac{n}{2}-3}}$	&$v_{\frac{n}{2}-1}$ &$v_{{\frac{n}{2}+1}}$ &$v_{{\frac{n}{2}+3}}$   & $\cdots$  &$v_{n-3}$ &$v_{n-1}$ 
		\\ \hline                                                       
		$v_1$ &$1$ &$0$ &$n-3$ &$n-5$ &$\cdots$ &$\frac{n}{2}+3$ &${\frac{n}{2}}+1$ &$\frac{n}{2}$ &${\frac{n}{2}}+2$ &$ \cdots $  &$n-4$ &$1$   \\ \hline
		
	\end{tabular}
	\vspace{0.2cm}
	\caption{Detour monophonic distance from $v_{1} \  to \ V(D^1_{vc})$.}
	\label{tab18a}
\end{table} 

Let $n$ is even and $n \equiv 2 (mod ~4)$.\\
Then  the vertex cover set $ V(D^{1*}_{vc})= \{ v_0, v_1, v_{3}, v_{5}, \cdots, v_{{\frac{n}{2}-2}}, v_{\frac{n}{2}}, v_{{\frac{n}{2}+2}}, v_{{\frac{n}{2}+4}}, \cdots, v_{n-3},\\  v_{n-1} \}$ and $|D^{1*}_{vc}|= {\frac{n}{2} }+1$. Without loss of generality take ${\dot{s}} = v_1$.  \\
\begin{table}[h]
	\centering 
	\begin{tabular}{|c|c|c|c|c|c|c|c|c|c|c|c|c|} \hline
		&$v_0$	&$v_1$	&$v_{3}$	&$v_{5}$	&$\cdots$ &$v_{{\frac{n}{2}-2}}$	&$v_{\frac{n}{2}}$ &$v_{{\frac{n}{2}+2}}$ &$v_{{\frac{n}{2}+4}}$   & $\cdots$  &$v_{n-3}$ &$v_{n-1}$ 
		\\ \hline                                                       
		$v_1$ &$1$ &$0$ &$n-3$ &$n-5$ &$\cdots$  &${\frac{n}{2}}+2$ &$\frac{n}{2}$ &${\frac{n}{2}}+1$ &${\frac{n}{2}}+3$ &$ \cdots $  &$n-4$ &$1$   \\ \hline
	\end{tabular}
	\vspace{0.2cm}
	\caption{Detour monophonic distance from $v_{1} \  to \ V(D^{1*}_{vc})$.}
	\label{tab18a*}
\end{table} 

From Tables \ref{tab18a} and \ref{tab18a*},  in order to cover the vertices of $W_n$, it suffices to cover the vertices of the corresponding  vertex cover set. Thus,  the pebble distribution with either of the above vertex cover sets is given by,\\
$ 2^{1}+ 2^0+ 2^{{\frac{n}{2}}}  +  2^{{\frac{n}{2}}+1}  +  2^{{\frac{n}{2}}+2} +  2^{{\frac{n}{2}}+3}  +\cdots + 2^{n-5}+2^{n-4}+2^{n-3}$, which  sums up into
${\sum_{i = \frac {n}{2} }^{n-3} 2^{i}} + 5 $.\\ The result is true for all $v_i (2 \leq i \leq n-1 )$.\\
Let the source  vertex be the hub.
Then ${\dot{s}= v_{0} }$. Without loss of generality let $D^4_{vc}=\{v_0, v_1, v_3,  \cdots, v_{n-5}, v_{n-3}, v_{n-1}   \}$. \\
 
The detour monophonic distance from any other vertex of $W_n$ to $v_0$ is 1. Thus in order to cover the vertices of $D^4_{vc}$, we have the  pebble distribution that sums up as follows.
$\mu_{vc} (G, v_0) + \mu_{vc} (G, v_1) + \mu_{vc} (G, v_{3}) + \cdots + \mu_{vc} (G,v_{ n-5}) + \mu_{vc} (G,v_{ n-3}) + \mu_{vc} (G,v_{ n-1}) $\\
$= 2^0+ 2^1+2^1+\cdots +2^1+2^1+2^1$\\
$ 2 {( {\frac{n}{2}} ) }+1= n+1 $ which is less than the considered number of pebbles.\\ 

Thus,
$\mu_{vc} (W_n) ={\sum_{i = \frac {n}{2} }^{n-3} 2^{i}} + 5 $. 
Hence proved.\\ 

		\noindent{\bf Case 2:} 
			When $W_n$ be an odd wheel of order $n$.\\ Let $(n \geq 3)$
				and $n \equiv 1 (mod ~4)$.\\
		Consider the detour monophonic vertex cover set $D''_{vc} = \{ v_0,  v_{2(k)-1} | k = 1,2, 3,\\ \cdots, n-2 \}$ and thus $|D''_{vc}|= {\lfloor \frac{n}{2} \rfloor}$.
		
		Let ${\dot{s}}  = v_1$ and  the initial configuration be $2\big(\sum_{j = \lfloor{\frac{n}{4}\rfloor} + 1}^{{\lfloor{\frac{n}{2}}\rfloor} -1} 2^{2(j)} \big) + 2^{\lfloor{\frac{n}{2}}\rfloor} + 1 $.
		In order to cover the vertices    $v_{(n-3)}$ we require  $2^ {n - 3}$ pebbles  and to cover  $v_{(n-5)}$ we require  $2^ {n -5}$ pebbles respectively using the detour monophonic path. Similar process can be carried on to cover the remaining vertices of $D^1_{vc}$ using the following series of pebble distribution that sums up, \\
		$2^{1}+
		2^{n-3}+ 2^{n-5}+ \cdots + 2^{\lfloor \frac{n}{2} \rfloor+2 }+ 2^{\lfloor \frac{n}{2} \rfloor}+2^{{\lfloor \frac{n}{2} \rfloor}+2 }+ \cdots + 2^{n-5}+2^{n-3} $\\
		$= 2+ 2( 2^{n-3}+ 2^{n-5}+\cdots +2^{{{\lfloor \frac{n}{2} \rfloor}+2 }} )+2^{\lfloor \frac{n}{2} \rfloor}$ \\
		$ = 2\big(\sum_{i = \lfloor{\frac{n}{4}\rfloor} + 1}^{{\lfloor{\frac{n}{2}}\rfloor} -1} 2^{2(i)} \big) + 2^{\lfloor{\frac{n}{2}}\rfloor} + 2 $. \\ 
		Thus we can cover the vertices $v_3$ and $v_5$ with $2^ {n - 3}$ and $2^ {n - 5}$ pebbles and we are left with no pebble to cover the source vertex $v_1$. Therefore, the detour monophonic vertex cover number
				 	$\mu_{vc} (W_n)  \geq  2\big(\sum_{j = \lfloor{\frac{n}{4}\rfloor} + 1}^{{\lfloor{\frac{n}{2}}\rfloor} -1} 2^{2(j)} \big) + 2^{\lfloor{\frac{n}{2}}\rfloor} + 3 $. \\ 
		 	
		 Now let ${\dot{s}}  = v_i (1 \leq i \leq n-1)$ and 	$D^{2}_{vc} = \{ v_0,  v_{i+2}, v_{i+4}, \cdots, v_{\lfloor{\frac{n}{2}\rfloor}+i-2}, v_{\lfloor{\frac{n}{2}\rfloor}+i}, v_{{\lfloor {\frac{n}{2}\rfloor}+i+2}}, \\ \cdots, v_{n+i-5}, v_{n+i-3} \}$, where the indices run over modulus $n$. \\
		 
			\begin{table}[h]
			\centering 
			\begin{tabular}{|c|c|c|c|c|c|c|c|c|c|c|c|} \hline
				&$v_0$ &$v_i$	&$v_{i+2}$	&$v_{i+4}$	&$\cdots$ &$v_{\lfloor{\frac{n}{2}\rfloor}+i-2}$	&$v_{\lfloor{\frac{n}{2}\rfloor}+i}$ &$v_{{\lfloor{\frac{n}{2}\rfloor}+i+2}}$ & $\cdots$ &$v_{n+i-5}$ &$v_{n+i-3}$
				\\ \hline
				$v_i$ &$1$ &$0$ &$n-3$ &$n-5$ &$\cdots$ &$\lfloor{\frac{n}{2}\rfloor}+2$ &$\lfloor{\frac{n}{2}}\rfloor$ &$\lfloor{\frac{n}{2}\rfloor}+2$  &$ \cdots $ &$n-5$ &$n-3$   \\ \hline
							\end{tabular}
			\vspace{0.2cm}
			\caption{Detour monophonic distance from $v_{i}$ \  to \ $V(D^{2}_{vc})$.}
			\label{tab16a}
		\end{table} 
		Obviously $v_i$ is covered with one pebble. From Table 
		\ref{tab16a}, $D_m (v_0,v_i )=1$ and so $v_0$ is covered with two pebbles. In order to cover the vertices   $v_{i+2}$ and $v_{n+i-3}$ we require  $2^ {n - 3}$ pebbles each and to cover  $v_{i+4}$ and $v_{n+i-5}$ we require  $2^ {n - 5}$ pebbles each. 
		Similarly  to cover $v_{\lfloor{\frac{n}{2}\rfloor}+i-2}$ and $v_{\lfloor{\frac{n}{2}\rfloor}+i-2}$, we require $ 2^ {\frac{n}{2} +2}$ pebbles. We need $2^ {\lfloor{\frac{n}{2}}\rfloor}$  
		pebbles to cover the vertex $v_{{\lfloor{\frac{n}{2}\rfloor}+i}}$. Thus in order to cover the vertices in $D^2_{vc}$ we distribute the pebbles as follows.
		$ \mu_{vc} (G, v_0) +\mu_{vc} (G, v_i) + \mu_{vc} (G, v_{i+2}) +  \mu_{vc} (G, v_{i+4})+ \cdots + 
		        \mu_{vc}(G, v_{{\lfloor{\frac{n}{2}}\rfloor}+i-2} )+
				\mu_{vc} (G, v_{{\lfloor{\frac{n}{2}} \rfloor}+i} ) + 
				\mu_{vc} (G, v_{i+{\frac{n+4}{2} }+2} ) +
			   \cdots +\mu_{vc} (G, v_{n+i-5})+\mu_{vc} (G, v_{n+i-3}) $. \\
		$ =2^1+ 2^0+ 
		2^{n-3}+ 2^{n-5}+ \cdots + 2^{\lfloor{\frac{n}{2}}\rfloor+2}+ 2^{\lfloor{\frac{n}{2}}\rfloor}+2^{\lfloor{\frac{n}{2}}\rfloor+2}+ \cdots + 2^{n-5}+2^{n-3} $\\
		$=2+ 1 + 2( 2^{n-3}+ 2^{n-5}+\cdots +2^{\lfloor{\frac{n}{2}}\rfloor +2} )+2^{\lfloor{\frac{n}{2}}\rfloor}$ \\
		$=2\big(\sum_{j = \lfloor{\frac{n}{4}\rfloor} + 1}^{{\lfloor{\frac{n}{2}}\rfloor} -1} 2^{2(j)} \big) + 2^{\lfloor{\frac{n}{2}}\rfloor} + 3 $. \\ 
	
				Since all the vertices are of same detour monophonic distance, the number of pebbles required to cover from any other vertex of $W_n$ except the hub is  	$ 2\big(\sum_{j= \lfloor{\frac{n}{4}\rfloor} + 1}^{{\lfloor{\frac{n}{2}}\rfloor} -1} 2^{2(j)} \big) + 2^{\lfloor{\frac{n}{2}}\rfloor} + 3 $. \\ 			
				
	Assume the source  vertex is the hub.
		
		Then ${\dot{s}= v_{0} }$. Without loss of generality let $D*_{vc}=\{v_0, v_1, v_3,  \cdots, v_{n-6}, v_{n-4}, v_{n-2}  \}$. \\
	
			Since $D_m (x, v_0)= 1$, where $x$ is any  vertex of $W_n$,    Thus in order to cover the vertices of $D*_{vc}$, we have the  pebble distribution that sums up as follows.
		$\mu_{vc} (G, v_0) + \mu_{vc} (G, v_1) + \mu_{vc} (G, v_{3}) + \cdots + \mu_{vc} (G,v_{ n-6}) + \mu_{vc} (G,v_{ n-4}) + \mu_{vc} (G,v_{ n-2}) $\\
		$= 2^0+ 2^1+2^1+\cdots +2^1+2^1+2^1$\\
					$=2 { \lfloor{\frac{n}{2}} \rfloor }+1$ which is less than the considered number of pebbles.\\ 
			Thus,
		$\mu_{vc} (W_n)  =  2\big(\sum_{j = \lfloor{\frac{n}{4}\rfloor} + 1}^{{\lfloor{\frac{n}{2}}\rfloor} -1} 2^{2(j)} \big) + 2^{\lfloor{\frac{n}{2}}\rfloor} + 3 $.\\
		Hence proved.\\ 
		
		\noindent{\bf Case 3:} 
	
		When $n$ is odd and $n \equiv 3 (mod ~4)$, let $D^3_{vc} = \{ v_{i+ 2j} |i= 1,2,3,\cdots,n; j= 0,1,2,\cdots, \lfloor { \frac{n}{4}-1}	\rfloor,  \lfloor {\frac{n}{4}} \rfloor,  \lfloor { \frac{n}{4}+1} \rfloor, \cdots,  { \frac{n}{2}-2}, { \frac{n}{2}-1} \}$ be the vertex cover set, where the indices run over modulus $n$. Thus, $|D^3_{vc}|= \frac{n}{2}$. 
		Now consider the configuration  which encages $2\big(\sum_{k} 2^{2(k)} \big)  + 2 $ pebbles to cover the vertices of $W_n$. To cover the vertices $v_{i+2}$ and $v_{i+n-2}$, we need $2^{n-2}$ pebbles. Similarly to cover the vertices $v_{i+4}$ and $v_{i+n-4}$, we need $2^{n-4}$ pebbles. We use $2^{\frac{n}{2}+1}$ pebbles to cover $v_{i+2(\lfloor {\frac{n}{4}} \rfloor )}$ and $v_{i+{2(\lfloor {\frac{n}{4}} \rfloor  +1) }}$. Obviously, we require a pebble to cover  ${\dot{s}}=v_i (i= 1,2,3,\cdots, n-1)$.
		
		Thus,    $\mu_{vc} (W_n) \geq  2\big(\sum_{k = {\lfloor {\frac{n}{4}} \rfloor} + 1}^{\lfloor{\frac{n}{2}}\rfloor -1} 2^{2(k)} \big)  + 3 $.\\
		
		\begin{table}[h]
		\centering 
		\begin{tabular}{|c|c|c|c|c|c|c|c|c|c|c|c|} \hline
			&$v_i$	&$v_{i+2}$	&$v_{i+4}$	&$\cdots$ &$v_{\lfloor{\frac{n}{2}\rfloor}+i-3}$
			&$v_{\lfloor{\frac{n}{2}\rfloor}+i-1}$	&$v_{\lfloor{\frac{n}{2}\rfloor}+i+1}$ &$v_{{\lfloor{\frac{n}{2}\rfloor}+i+3}}$ & $\cdots$ &$v_{n+i-5}$ &$v_{n+i-3}$
			\\ \hline
			$v_i$ &$0$ &$n-3$ &$n-5$ &$\cdots$ &$\lfloor{\frac{n}{2}\rfloor}+3$
			&$\lfloor{\frac{n}{2}\rfloor}+1$ &$\lfloor{\frac{n}{2}}\rfloor +1$ &$\lfloor{\frac{n}{2}\rfloor}+3$  &$ \cdots $ &$n-5$ &$n-3$   \\ \hline
			
			\end{tabular}
			\vspace{0.2cm}
			\caption{Detour monophonic distance from $v_{i} \  to \ V(D^3_{vc})$.}
			\label{tab17a}
		\end{table} 
		
		Clearly only one pebble suffices to cover the vertex $v_i (i= 1,2,3,\cdots,n)$. From Table \ref{tab17a},  all  the
		vertices of $D^2_{vc}$ are covered by the following series of pebble distribution. 
		
		$ \mu_{vc} (G, v_0) + \mu_{vc} (G, v_i) + \mu_{vc} (G, v_{i+2}) +  \mu_{vc} (G, v_{i+4})+ \cdots + \mu_{vc} (G, v_{\lfloor{\frac{n}{2}\rfloor}+i-3} )+ \\ \mu_{vc} (G, v_{\lfloor{\frac{n}{2}\rfloor}+i-1}) + \mu_{vc} (G, v_{\lfloor{\frac{n}{2}\rfloor}+i+1} )+\mu_{vc} (G, v_{\lfloor{\frac{n}{2}\rfloor}+i+3})  + \cdots + \mu_{vc} (G, v_{n+i-5})+\mu_{vc} (G,v_{n+i-3}) $. \\
		$ = 2^1+ 2^0+ 
		2^{n-3}+ 2^{n-5}+ \cdots + 2^{\lfloor{\frac{n}{2}}\rfloor +3 }+ 2^{\lfloor{\frac{n}{2}}\rfloor +1 } + 2^{\lfloor{\frac{n}{2}}\rfloor +1 }+ 2^{\lfloor{\frac{n}{2}}\rfloor +3 }+ \cdots + 2^{n-5}+2^{n-3}$ \\
		=$2\big(\sum_{k = {\lfloor {\frac{n}{4}} \rfloor} + 1}^{\lfloor{\frac{n}{2}}\rfloor -1} 2^{2(k)} \big)  + 3 $. \\
				Since all the vertices except $v_0$ of $W_n$ are at equivi distance, the result holds for any $v_i \in V(W_n) (2 \leq i \leq n-1)$, when $n$ is odd and $n \equiv 3 (mod ~4)$.
		
				When the source  vertex is the hub, the result follows from the previous discussion of this Theorem and the configuration of pebbles is $2 { \lfloor{\frac{n}{2}} \rfloor }+1$ which is less than the considered number of pebbles.\\ Hence, \\
					 $\mu_{vc} (W_n) =  2\big(\sum_{k = {\lfloor {\frac{n}{4}} \rfloor} + 1}^{\lfloor{\frac{n}{2}}\rfloor -1} 2^{2(k)} \big)  + 3 $.\\
				
			\end{proof}
				
		\section{\bf Algorithm to find DMVCPN}
		
		\noindent{\bf Step 1:} Find the vertex cover set $D_{vc}$ with minimum cardinality for a graph $G$, which covers all the edges of $G$. 
		
		\noindent{\bf Step 2:} Fix the source vertex $\dot{s}$.
			
		\noindent{\bf Step 3:} Find the detour monophonic distance  $D_m(\dot{s}, v)$, where $v \in V(G)$.
		
		\noindent{\bf Step 4:}  By distributing the pebbles place one pebble each to cover the vertices of $D_{vc}$ such that the number of pebbles used is minimum for all configuration of pebbles. 
	
		\noindent{\bf Step 5:} Repeat Step 4 for various source vertices until reaching the minimal number of pebbles that cover all the vertices of $D_{vc}$ and so the vertices of $G$.
	
		\noindent{\bf Step 6:} End the process if each  vertex of $G$ is considered to be $\dot{s}$. \\

			\section{\bf Various Pebbling Numbers of $G$}
			
			The various graph pebbling numbers displayed in  the Table \ref{tab24} exhibit that DMVCPN of a graph is distinct.\\
			
			\begin{table}[h]
			\centering 
			\tiny
			\begin{tabular}{cccc} \hline
			Standard 	&  Detour pebbling 	& Monophonic pebbling  	& Detour monophonic vertex  cover 
				\\ 
		     Graph  $G$       & number  $f^{*}(G) $ \cite{5b}    	&  number $\mu(G)$ \cite{5}   	&  pebbling number $\mu_{vc}(G)$
			\\ \hline \hline
			
				  $P_n$   &$2^{n-1}$       &$2^{n-1} $      &	$\sum_{i = 0}^{ \lfloor {\frac{n}{2}} \rfloor -1} 2^{2(i)+1},$	n  is  odd  \\ 
				  $$   &$$       &$ $      & $ \sum_{j = 0}^{ {\frac{n}{2}}  -1} 2^{2(j)}$, n is even \\ 
				  
				$C_n$     &$2^{n-1} $       & $2^{n-2} +1$     &$	\begin{cases}
					2\big(\sum_{i = \frac{n}{4} + 1}^{\frac{n}{2} -1} 2^{2(i)} \big) + 2^{\frac{n}{2} } + 1, & \text{if } n \ is \ \text{even } \text{and }   n \equiv 0 (mod ~4)\\
					
					2\big(\sum_{j = { \lfloor \frac{n}{4} \rfloor } + 1}^{\frac{n}{2}-1} 2^{2(j)} \big) +  1, & \text{if } n \ is \ \text{even } \text{and } 
					\ n \equiv 2 (mod ~4)\\
					{\sum_{k = {\lfloor \frac {n}{2} \rfloor}+1}^{n-2} 2^{k}} + 3,  &  \text{if } n \ is \ \text{odd }
					
				\end{cases}$ \\

			$F_n$     &$2^{n-1} $       & $2^{n-2} +1$     &$\sum_{j = 0}^{\lfloor {\frac{n}{2}} \rfloor -1}    2^{2(j)} +  2
			$, n  is odd \\ 
				 $ $   &$ $       &$ $      &	$ \sum_{i = 0}^{  {\frac{n}{2}} -2} 2^{2(i)+1} +2$, n  is  even   \\ 
				 
				$W_n$     &$2^n$      &$2^{n-2} +2$                             & $\begin{cases}
					{\sum_{i = \frac {n}{2} }^{n-3} 2^{i}} + 5 ,  &  \text{if } n \ is \ \text{even }\\
					2\big(\sum_{j = \lfloor{\frac{n}{4}\rfloor} + 1}^{{\lfloor{\frac{n}{2}}\rfloor} -1} 2^{2(j)} \big) + 2^{\lfloor{\frac{n}{2}}\rfloor} + 3, & \text{if } n \ is \ \text{odd }\  \text{and }   n \equiv 1 (mod ~4)\\
					2\big(\sum_{k = {\lfloor {\frac{n}{4}} \rfloor} + 1}^{\lfloor{\frac{n}{2}}\rfloor -1} 2^{2(k)} \big)  + 3, & \text{if } n \ is \ \text{odd} \ \text{and } 
					\ n \equiv 3 (mod ~4)\\
					
				\end{cases}$\\

				   \hline \hline
										
			\end{tabular}
			\vspace{0.2cm}
			\caption{ Various pebbling numbers for some standard graphs}
			\label{tab24}
		\end{table} 
		
	\section{\bf Conclusion}
		 	In this article, we have combined   cover pebbling number and detour monophonic number to obtain an interesting variate of graph theory, the detour monophonic vertex cover pebbling number
		 		the interesting areas of graph theory. We have  studied    and determined $\mu_{vc} (G)$  for some standard graphs. To evaluate the  specific values of $\mu_{vc} (G)$ for other classes of well-known graphs is an open area of research.
	\bigskip

\end{document}